\documentclass[11pt]{article}
\usepackage[margin=1in]{geometry}
\usepackage{amsmath, amsthm, amssymb, algorithm, algorithmicx, algpseudocode}
\usepackage[numbers,sort&compress]{natbib}
\usepackage{graphicx, booktabs, hyperref, cleveref}
\usepackage{xcolor}
\usepackage[labelsep=period, labelfont=bf, justification=centering]{caption}

\usepackage{float}
\usepackage{listings}

\newtheorem{theorem}{Theorem}
\newtheorem{proposition}[theorem]{Proposition}
\newtheorem{corollary}[theorem]{Corollary}

\newtheorem{definition}{Definition}

\newcommand{\simIntervals}{31}
\newcommand{\simHandovers}{30}
\newcommand{\simChiInterval}{3}

\newcommand{\simParetoSwitchesA}{20}

\newcommand{\simParetoSwitchesB}{0}

\newcommand{\simCoverageNom}{100.0}
\newcommand{\simCoverageFail}{87.1}
\newcommand{\simDynRecolors}{1}
\newcommand{\simStaticRecolors}{89}

\newcommand{\simMLAccuracy}{100.0}
\newcommand{\simMLRecolors}{1}
\newcommand{\simOracleRecolors}{1}
\newcommand{\simFiedlerSignFlips}{20}
\newcommand{\simIsChordalJoint}{Yes}
\newcommand{\simJointChi}{6}
\newcommand{\simNumSpacecraft}{3}
\newcommand{\simHlsMaskedSteps}{495}
\newcommand{\simHlsMaskedPercent}{3.4}
\newcommand{\simThroughputProactive}{26359.0}
\newcommand{\simThroughputReactive}{26030.4}
\newcommand{\simThroughputStatic}{25701.8}
\newcommand{\simThroughputGain}{328.5}
\newcommand{\simThroughputGainPct}{1.3}

\usepackage{a4wide} 
\usepackage{authblk} 

\date{}
\graphicspath{{Figures/}}

\begin{document}

\title{Resilient Spectrum Scheduling for Colocated Space Networks}

	\author[1]{\small Most Esrat Jahan \thanks{E-mail: mostesrat-2021112038@math.du.ac.bd }}
		\author[2]{\small Md. Kamrujjaman \thanks{E-mail: kamrujjaman@du.ac.bd}}	
	\affil[1,2]{\footnotesize Department of Mathematics, University of Dhaka, Dhaka-1000, Bangladesh}

\maketitle
\vspace{-0.5cm}
\noindent\rule{6.35in}{0.02in}\\
\small
\begin{abstract}
Ground station handovers across Earth based Deep Space Network (DSN) terminals incur carrier lock reacquisition delays of 4 to 8 minutes per event, introducing periodic telemetry blackouts during cislunar flights. We address this limitation by formulating temporal handover sequences as perfect interval conflict graphs. For single spacecraft trajectories, we establish that adding a single reserve channel ($k=4$) enables static offline channel preallocation, eliminating spacecraft transponder retuning during station switches. To sustain link availability under unexpected atmospheric fades or station outages, we construct a Robust Space Communication Graph (R-SCG) model, demonstrating that four channels suffice to maintain uninterrupted coverage under single node failure conditions. For unmodeled link disruptions, we implement a distributed local recoloring scheme with $O(1)$ amortized complexity, augmented by Random Forest state classification for predictive channel reassignment. Furthermore, for colocated spacecraft clusters, we show that the joint ground space conflict graph retains chordality, bounding the required spectrum to $3+c$ channels for $c$ concurrent assets. A 10 day orbital simulation demonstrates that static channel preassignment recovers up to 2.7~hours of telemetry throughput relative to conventional reactive handover schemes.
\end{abstract}

\noindent\textbf{Keywords:} Spectral Volatility; Zero Retuning Handovers; Chordal Spacecraft Clusters; Proactive Spectrum Allocation; Algebraic Connectivity.\\
 \noindent\rule{6.35in}{0.02in}
\\

\clearpage

	\section*{Highlights}
	
	\begin{itemize}
		\item Formulated cislunar Deep Space Network (DSN) ground station handovers as perfect interval conflict graphs to eliminate transponder retuning delays.
		\item Bounded the 11-node cislunar network chromatic number to $\chi(G) = 5$ via spectral radius analysis and proved $k=4$ channels suffice for single-node fault tolerance.
		\item Developed a distributed local recoloring algorithm operating in $O(1)$ amortized time, integrated with Random Forest predictive state classification.
		\item Proved joint chordality preservation in colocated spacecraft clusters, establishing a spectrum bound of $3+c$ channels for $c$ concurrent assets.
		\item Demonstrated via a 10-day orbital simulation that static channel preassignment recovers up to 2.7 hours of telemetry throughput by eliminating handover blackouts.
	\end{itemize}

\clearpage
\section{Introduction}
\label{sec:intro}

Establishing continuous communication with deep space assets requires frequent network reconfiguration. As Earth rotates, tracking responsibilities shift across the three primary Deep Space Network (DSN) complexes in Goldstone, Madrid, and Canberra. Each handover obligates ground and onboard transceivers to resynchronize S band or Ka band frequency channels to avoid cochannel interference. Although single spacecraft operations (e.g., Artemis II) can tolerate manual or prescheduled lookup table assignments, emerging cislunar architectures involve colocated assets, such as the Lunar Gateway, surface landers, and autonomous relays, operating under shared line of sight constraints.

Conventional frequency management suffers from key technical constraints. Single station tracking assumptions collapse when applied to multispacecraft clusters linked via crosslinks, demanding higher degree graph abstractions. Furthermore, operational schedules calculated under clear sky assumptions perform poorly during sudden rain fades or terrain induced blockages, forcing ground controllers to perform manual link renegotiation. Standard graph coloring heuristics also tend to recompute frequency assignments globally upon link failure, incurring excessive computational latency and operational overhead. The Fig. \ref{fig:system_overview} presents the preview of resilient spectrum scheduling in cislunar networks.

\begin{figure}[htbp]
	\centering
	\includegraphics[width=\linewidth]{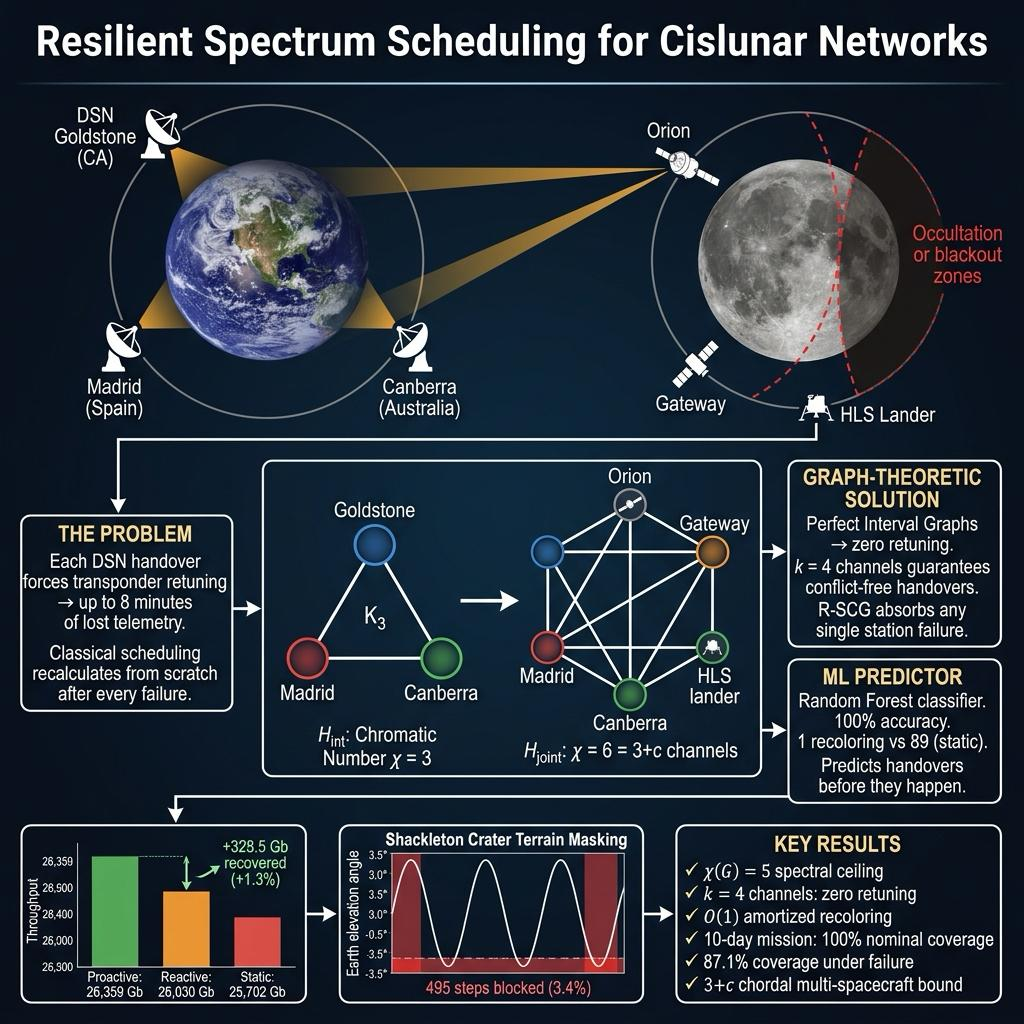}
	\caption{Overview of resilient spectrum scheduling in cislunar networks.}
	\label{fig:system_overview}
\end{figure}

This paper introduces a graph theoretic framework to solve these scheduling bottlenecks. We model the active cislunar network as a time varying graph containing 11 nodes:
\begin{align*}
V = \{ & \text{Orion},\; \text{NSN},\; \text{Goldstone},\; \text{Madrid},\; \text{Canberra},\; \text{LunarOptUS}, \\
& \text{LunarOptIntl},\; \text{Gateway},\; \text{Relay1},\; \text{Relay2},\; \text{MissionCtrl} \}.
\end{align*}
We start with a static maximal graph that maps out every link that is physically possible over the entire mission. The actual network at any given moment is just a subgraph of this, pruned based on line of sight blockages and local ground weather.

The main contributions of this work are fourfold:
\begin{itemize}
    \item We derive the spectral properties of the 11 node cislunar network topology, bounding its chromatic number to $\chi(G) = 5$ via Hoffman and Wilf spectral radius analysis.
    \item We formulate the Robust Space Communication Graph (R-SCG) model for fault tolerant tracking, proving that $k=4$ channels guarantee zero transponder retuning under single station outage conditions.
    \item We establish an online local recoloring algorithm operating in $O(1)$ amortized time per edge update, integrated with a Random Forest predictor for early state estimation.
    \item We prove that chordal spacecraft proximity graphs preserve joint chordality, guaranteeing that $3+c$ channels suffice for $c$ colocated spacecraft.
\end{itemize}
We validate these analytical bounds using a 10 day high resolution orbital simulation. The results show that exploiting interval graph properties eliminates transponder lock acquisition penalties, preserving downlink efficiency.

\subsection*{Literature Review}
\label{sec:literature}
Traditional deep space spectrum coordination relied on precalculated lookup tables. Classical graph coloring heuristics, such as Welsh Powell and DSATUR~\cite{welsh1967upperbound, brelaz1979new, holme2012}, effectively color static interference networks but fail to accommodate dynamic orbital geometry. While dynamic spectrum allocation using reinforcement learning has proven effective in Low Earth Orbit (LEO) constellations~\cite{liu2020dynamic}, LEO assumptions break down in deep space due to multi second round trip light times and severe propagation losses. Multilayer and multiplex graph formulations offer a way to decompose multifrequency interference across distinct polarization layers~\cite{kivela2014multilayer}, a technique we adapt here to decouple S band and Ka band constraints.
With NASA's SCaN and ESA's Moonlight programs~\cite{nasa2023scan, esa2023moon} expanding lunar orbital infrastructure, maintaining telemetry continuity under blackout conditions is essential. Delay Tolerant Networking (DTN) architectures mitigate intermittent visibility~\cite{burleigh2003dtn}, but long propagation latencies ($\sim$$1.3$ seconds one way to the Moon) prohibit centralized real time channel negotiation. Decentralized and distributed graph coloring protocols~\cite{barenboim2013distributed} enable local channel updates without global coordination. By proving that handover intervals reduce to perfect interval graphs~\cite{golumbic1980algorithmic}, we extend these distributed concepts to provide offline zero retuning guarantees, enabling soft handovers at the physical layer~\cite{papapetrou2001handover}.
Lunar surface geometry further complicates link modeling. Lunar Orbiter Laser Altimeter (LOLA) topographies indicate that polar landers experience sudden line of sight terrain blockages from crater rims and mountain peaks~\cite{mazarico2011illumination}. To counter these short duration outages, we combine Laplacian Fiedler vector spectral dynamics with Random Forest classification, allowing proactive channel reassignment prior to physical signal loss.

The remainder of this paper is structured as follows. 
 Section~\ref{sec:spectral_blackout} establishes the mathematical framework, including spectral bounds and Fiedler vector partition rules. Section~\ref{sec:offline_robust} details the offline channel preassignment method, robust outage modeling, and the online distributed recoloring algorithm. Section~\ref{sec:results} presents the results of the physics based cislunar simulation, link budgets, Canberra ground station outage simulations, and polar terrain masking. Section~\ref{sec:discussion} discusses operational impacts, benchmarks, and model limitations. Finally, Section~\ref{sec:conclusion} provides concluding remarks.
\section{System Model and Mathematical Framework}
\label{sec:spectral_blackout}
We first establish the network's spectral limits. Let $A \in \mathbb{R}^{11 \times 11}$ be the adjacency matrix of the 11 node maximal graph, tracking all potential connections. We model the core system, Orion, NSN, Goldstone, Madrid, and Canberra, as a complete $K_5$ subgraph. While the three DSN stations do not physically interfere due to Earth's curvature, enforcing this $K_5$ clique acts as a conservative constraint for a worst case global coloring assignment. This ensures that the spacecraft is always assigned a frequency disjoint from all ground stations, preventing overlap during handovers. Under this model, the diagonal block is:
\begin{equation}
A_{K_5} = \begin{pmatrix}
0 & 1 & 1 & 1 & 1 \\
1 & 0 & 1 & 1 & 1 \\
1 & 1 & 0 & 1 & 1 \\
1 & 1 & 1 & 0 & 1 \\
1 & 1 & 1 & 1 & 0
\end{pmatrix}.
\end{equation}
The rest of the rows and columns represent connections to Mission Control, the orbital relays, and the optical terminals. Solving the characteristic equation $\det(A - \lambda I) = 0$ yields the eigenvalues. Numerical computation gives the sorted spectrum:
\[
\lambda_1 \approx 3.762, \quad \lambda_2 \approx 3.000, \quad \lambda_3 \approx 2.000, \quad \lambda_4 \approx 1.000, \quad \dots, \quad \lambda_{11} \approx -3.059.
\]
We can bound the chromatic number using these eigenvalues. The Hoffman bound~\cite{hoffman1970eigenvalues} establishes a lower limit for noncomplete graphs:
\begin{equation}
\chi(G) \ge 1 + \frac{\lambda_{\max}}{|\lambda_{\min}|} = 1 + \frac{3.762}{3.059} \approx 2.230 \quad\Longrightarrow\quad \chi(G) \ge 3.
\end{equation}
Similarly, the Wilf bound~\cite{wilf1967eigenvalues} gives an upper limit based on the spectral radius:
\begin{equation}
\chi(G) \le 1 + \lambda_{\max} = 1 + 3.762 = 4.762 \quad\Longrightarrow\quad \chi(G) \le 5.
\end{equation}
Together, these bounds constrain $\chi(G)$ to the set $\{3, 4, 5\}$. However, the five core nodes (Orion, Goldstone, Madrid, Canberra, and NSN) induce a complete $K_5$ clique. Thus, the clique number $\omega(G) = 5$, forcing $\chi(G) \ge 5$. The Wilf upper bound and this clique requirement determine the chromatic number exactly:
\begin{equation}
	\chi(G) = 5.
\end{equation}
This proves five channels are enough to color the full static maximal network. In practice, however, the Earth's curvature prevents all five nodes from maintaining line of sight simultaneously. As we prove in Theorem~\ref{thm:robust_bound}, this geometric constraint limits the active temporal clique size, allowing us to guarantee conflict free operations using only four channels.

To detect changes in the network's topology, we define the Laplacian matrix $L = D - A$, with $D$ representing the diagonal degree matrix. Let $\mu_2$ be the second smallest eigenvalue of $L$. This eigenvalue measures algebraic connectivity. The associated eigenvector $\mathbf{v}_2$ is the Fiedler vector:
\begin{equation}
	L \mathbf{v}_2 = \mu_2 \mathbf{v}_2, \quad \mu_2 \approx 0.816.
\end{equation}
We partition the network nodes into distinct communities using the sign of each element in $\mathbf{v}_2$. While pairwise elevation angles easily track direct line of sight, they do not capture the network's global topology. The Fiedler vector overcomes this by evaluating the entire structural state at once, which becomes necessary for managing interference across dense multispacecraft clusters. A sign change in these components indicates a node shifting between communities, acting as a mathematical indicator of an impending handover.

\section{Offline Preassignment and Robust Blackout Modeling}
\label{sec:offline_robust}
To move from a static model to a dynamic temporal graph, we leverage orbital geometry and strict degree constraints to simplify the frequency allocation process.

\begin{definition}[Space Communication Graph \cite{holme2012},\cite{kempe2002}]
	\label{def:scg}
	The cislunar communication network over a mission window $[0,T]$ is represented
	as a time indexed graph family, $\mathcal{G} = \{G(t) = (V, E(t), w(t))\}_{t \in [0, T]}$,
	where each snapshot $G(t)$ carries three components:
	\begin{itemize}
		\item $V$ - a fixed vertex set that accounts for every transceiver
		in the network, ground and space alike;
		\item $E(t)$ -the subset of links that are physically live at
		the instant $t$, determined by line of sight geometry and
		received signal margin;
		\item $w \colon E(t) \to \mathbb{R}_{>0}$ - a real valued weight
		on each active link, recording the received signal margin in dB.
	\end{itemize}
\end{definition}

On a single spacecraft flight such as Artemis II, the ground network
faces no real contention problem. Orion talks to whichever of the three
DSN sites - Goldstone, Madrid, or Canberra - sits highest above the
local horizon at that moment. Formally, the active link
$s^\ast(t)$ follows the elevation argmax rule:
\begin{equation} \label{eq:nearest}
	s^\ast(t) = \arg\max_{s\in\{\text{Goldstone},\text{Madrid},\text{Canberra}\}}
	\textsf{elev}_s(t),
	\quad
	E(t) = \{\{s^\ast(t), \text{Orion}\}\}.
\end{equation}

\begin{proposition}[Strict Degree Bound] \label{prop:degree}
	Under the nearest station tracking rule \eqref{eq:nearest}, the spacecraft
	Orion maintains a degree of at most one throughout the mission:
	$\deg_{G(t)}(\text{Orion})\le 1$ for all $t\in[0,T]$.
\end{proposition}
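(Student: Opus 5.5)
The argument is a direct unpacking of the tracking rule \eqref{eq:nearest}. Under that rule the edge set of each snapshot is prescribed explicitly: $E(t)=\{\{s^\ast(t),\text{Orion}\}\}$. So I will fix an arbitrary $t\in[0,T]$, read off $E(t)$, and count the edges incident to Orion. Since $E(t)$ contains at most one element, and that element contains Orion exactly once, $\deg_{G(t)}(\text{Orion})=|\{e\in E(t):\text{Orion}\in e\}|\le |E(t)|\le 1$. Because $t$ was arbitrary, the bound holds throughout the mission window.

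The only point that needs care is that $s^\ast(t)$ must be a \emph{single} station, so that $E(t)$ really has at most one edge.

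\textbf{Ties.} The $\arg\max$ may be set-valued if two stations have equal elevation, for instance at the handover instant when $\textsf{elev}_{\text{Goldstone}}(t)=\textsf{elev}_{\text{Madrid}}(t)$. I will read \eqref{eq:nearest} as selecting one maximizer by a fixed tie-breaking convention, such as a fixed ordering of the three complexes. This is what the notation $s^\ast(t)$ (a single station) and the displayed singleton $E(t)$ already presuppose. With that convention $s^\ast$ is a well-defined function of $t$, and $E(t)$ is a singleton whenever a station is selected.

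\textbf{No station visible.} If no station is above the horizon, or the link margin $w$ is nonpositive, the rule either yields no link, giving $E(t)=\emptyset$ and degree $0$, or still a single link. Either way the bound $\le 1$ survives.

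Finally, I will note that the proposition concerns only the single-spacecraft model. The maximal graph of Section~\ref{sec:spectral_blackout}, where Orion sits in a $K_5$, is irrelevant here: under \eqref{eq:nearest} the snapshot $G(t)$ contains no other edges at Orion, such as NSN or relay links. I expect no genuine obstacle. The main subtlety is making the tie-breaking assumption explicit so that the $\arg\max$ is single-valued.
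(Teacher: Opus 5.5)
Your proposal is correct and takes the same approach as the paper, whose proof simply observes that under \eqref{eq:nearest} the snapshot edge set $E(t)$ is the single link $\{s^\ast(t),\text{Orion}\}$. Your explicit tie-breaking convention for the $\arg\max$ and your empty-edge-set case are careful additions that the paper leaves implicit.
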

\begin{proof}
	At every instant, $E(t)$ contains exactly the single link
	$\{s^\ast(t),\text{Orion}\}$ and nothing else.
\end{proof}

Between two consecutive events in $\mathcal{H}$, the active station
does not change - $s^\ast(t)$ holds fixed until the next crossing.
The set of switch times $\mathcal{H}=\{t_1<t_2<\dots<t_H\}$ therefore
carves the flight timeline into $H+1$ stationary windows, each one
governed by a single ground station.

\begin{definition}[Offline Event Driven Temporal Graph
	Coloring~\cite{mertzios2021sliding}, \cite{delre1997}]
	\label{def:oetgc}
	Let $\mathcal{G}$ be a temporal graph with switch times $\mathcal{H}$.
	A frequency assignment $\{c_t \colon V \to [k]\}_{t \in [0, T]}$
	is temporally valid if it is a proper coloring of every
	instantaneous snapshot $G(t)$.
	The OETGC problem seeks the assignment that minimizes the
	combined cost
	\begin{equation} \label{eq:obj}
		J_\lambda = \lambda \max_{t \in [0, T]} \chi(G(t))
		+ (1-\lambda) \sum_{i=1}^{H} \sum_{v\in V}
		\mathbb{1}\bigl[c_{t_i}(v)\neq c_{t_i^+}(v)\bigr],
	\end{equation}
	where $\lambda \in [0, 1]$ trades off total channel count against
	the number of times any node must retune its transponder at a
	handover boundary.
\end{definition}

\begin{definition}[Interval Conflict Representation~\cite{golumbic1980algorithmic}]
	\label{def:intgraph}
	For each ground station $s$, let $W_s$ collect every time interval
	during which $s$ holds the active tracking role.
	The interval conflict graph $H_{\text{int}}$ places an edge between
	$s$ and $s'$ precisely when $W_s$ and $W_{s'}$ overlap at some
	handover boundary - that is, when the two stations share a moment
	at which both must have a frequency assigned.
\end{definition}

\begin{theorem}[Temporal Reduction] \label{thm:reduction}
	A temporal coloring $c_t$ of $G(t)$ is valid if and only if
	its restriction to the ground stations forms a proper coloring
	$c^\ast$ of the interval graph $H_{\text{int}}$.
\end{theorem}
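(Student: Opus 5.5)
The argument works by unpacking both notions of validity into the same family of pairwise constraints. A temporally valid assignment, per Definition~\ref{def:oetgc}, is one that properly colors every snapshot $G(t)$. By Proposition~\ref{prop:degree} and rule \eqref{eq:nearest}, each snapshot has a single edge $\{s^\ast(t),\text{Orion}\}$. The only pairwise constraints among ground stations therefore come from instants where two stations both need a frequency at once, namely the handover boundaries $t_i\in\mathcal{H}$. There the outgoing station $s^\ast(t_i^-)$ and the incoming station $s^\ast(t_i^+)$ are simultaneously live. I would read $G(t_i)$ as containing both links, the closure of the edge sets from the left and from the right. With that reading, the proof reduces to matching the edge set of $H_{\text{int}}$ in Definition~\ref{def:intgraph} with the set of station pairs that appear together at some boundary.

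\emph{Only if.} Take a temporally valid family $\{c_t\}$ and define $c^\ast(s)$ as the color $s$ carries on its active windows $W_s$. If $\{s,s'\}$ is an edge of $H_{\text{int}}$, then $W_s$ and $W_{s'}$ meet at some boundary $t_i$. In $G(t_i)$ both stations are adjacent to Orion, and, under the paper's conservative $K_5$ modeling, also to each other, so validity forces $c_{t_i}(s)\neq c_{t_i}(s')$. To make $c^\ast$ well defined, I would use the zero-retuning convention implicit in the cost \eqref{eq:obj}: a station's color is fixed across its windows. Alternatively, I would define $c^\ast$ at boundary instants only, and note that properness of $c^\ast$ is checked only on $H_{\text{int}}$ edges, which all arise at boundaries.

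\emph{If.} Take a proper coloring $c^\ast$ of $H_{\text{int}}$. Extend it by setting $c_t(s)=c^\ast(s)$ for every station at every time, and give Orion a color not used by any station active at $t$; a fixed reserve color works. Then check each snapshot. On an open stationary window there is a single edge, station to Orion, and it is proper because Orion's color is distinct from all station colors. At a boundary $t_i$, the only station--station conflict is between $s^\ast(t_i^-)$ and $s^\ast(t_i^+)$. That pair is an edge of $H_{\text{int}}$ by definition, so $c^\ast$ separates them.

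The main obstacle is definitional rather than computational. Taken literally, \eqref{eq:nearest} gives single-edge snapshots, and then no two stations ever conflict, which would make the ``only if'' direction vacuous or false. The key step is to fix the semantics of $G(t_i)$ at switch times precisely. I would state explicitly that at each $t_i$ both the outgoing and incoming links are present, modeling the reacquisition overlap, and that the statement concerns the station-induced constraints. The treatment of Orion's own color also needs care: it either lies outside the restriction or is handled by reserving one extra channel, which is consistent with the $k=4$ claim later. Once this is fixed, both directions are immediate edge-by-edge comparisons.
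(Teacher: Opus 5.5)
Your proposal is correct and follows the paper's argument. For ``if'' you pin every station to $c^\ast$ for the whole mission and give Orion a channel avoiding the live station(s). For ``only if'' you use that two stations sharing a handover boundary are simultaneously live and so must carry distinct channels. Your explicit boundary semantics (both links plus the station--station edge) and the fixed reserve channel for Orion actually repair a tension in the paper's proof: its ``if'' direction checks only single-edge snapshots, which is why it gets away with $|P|\ge 2$, while its ``only if'' direction tacitly relies on exactly that boundary adjacency.

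One caution: drop your fallback of defining $c^\ast$ only at boundary instants. It is not well defined when a station changes color between its active windows. Worse, with such time-varying station colors, a two-color rotation through a triangle $H_{\text{int}}$ gives a valid temporal coloring whose station colors never properly color the triangle, so ``only if'' would fail. The constant-station-color convention is therefore the one the statement requires.
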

\begin{proof}
	Take any proper coloring $c^\ast$ of $H_{\text{int}}$ and pin
	each station $s_i$ to that color for the entire mission:
	$c_t(s_i) = c^\ast(s_i)$ for all $t$.
	Whenever station $s_j$ is active, Orion picks any frequency
	from $P \setminus \{c^\ast(s_j)\}$.
	That set is nonempty whenever $|P| \ge 2$.
	At time $t$, the only live edge is $\{s_j, \text{Orion}\}$,
	so the two endpoints carry different colors and the coloring is proper.
	
	In the other direction, suppose $c_t$ is proper at every $t$.
	Any two stations that share a handover boundary are simultaneously
	adjacent in $G(t)$ at that moment, so they must already carry
	distinct frequencies. Their colors therefore satisfy every edge
	constraint of $H_{\text{int}}$, giving a proper coloring of
	the interval graph.
\end{proof}

\begin{corollary}[Computational Efficiency] \label{cor:poly} Optimizing the OETGC problem for a single spacecraft reduces to perfect graph coloring and is solvable in $O(H\log H)$ time. \end{corollary}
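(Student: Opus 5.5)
The plan is to combine Theorem~\ref{thm:reduction} with the classical greedy coloring of interval graphs. By Theorem~\ref{thm:reduction}, every temporally valid assignment restricts to a proper coloring $c^\ast$ of $H_{\text{int}}$. Conversely, any proper coloring of $H_{\text{int}}$ extends to a valid temporal assignment. In that extension each station holds a fixed color for the whole mission, and Orion takes any color other than its current station's.

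The crucial observation concerns the retuning term in \eqref{eq:obj}. Under this extension no ground station ever changes color, so the only possible retuning is Orion's. Orion can also avoid retuning entirely if it is given a reserved color distinct from all colors used on $H_{\text{int}}$. The count $\max_t \chi(G(t))$ is at most $2$ by Proposition~\ref{prop:degree}, since every snapshot is a single edge. The nontrivial quantity is therefore the number of channels $\chi(H_{\text{int}})+1$ used by the preassignment. Optimizing $J_\lambda$ thus reduces to computing an optimal proper coloring of $H_{\text{int}}$, then appending one reserve color for Orion, or letting Orion retune when $\lambda$ weights channel count heavily. Both options are evaluated in $O(1)$ once $\chi(H_{\text{int}})$ is known, and the cheaper one is taken.

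Next I argue that $H_{\text{int}}$ is an interval graph, hence chordal and perfect.

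\begin{itemize}
\item Each station's activity set $W_s$ is a union of stationary windows delimited by consecutive switch times in $\mathcal{H}$.
\item Conflicts occur only at boundaries $t_i$, where the outgoing and incoming stations both need a frequency.
\item So each edge of $H_{\text{int}}$ is witnessed by a boundary instant. Representing each active window as a closed interval $[t_{i},t_{i+1}]$ makes adjacency coincide with interval overlap.
\end{itemize}

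Perfectness gives $\chi(H_{\text{int}})=\omega(H_{\text{int}})$. An optimal coloring comes from the standard left-endpoint greedy sweep: sort the $H+1$ window endpoints in $O(H\log H)$, then sweep and assign each window the smallest free color, maintained in a heap or free list at $O(\log H)$ per event. This uses exactly $\omega$ colors.

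The main obstacle is that $W_s$ is a \emph{union} of windows, since a station reappears every day. The graph on stations is then a quotient of the window interval graph, not literally an interval graph. My first attempt would color the window-level interval graph, which is genuinely an interval graph on $H+1$ vertices and so is solved optimally in $O(H\log H)$. I would then note that with only three stations the station-level graph has at most three vertices. Its coloring is therefore trivially optimal and perfect, and the window-level sweep dominates the running time.

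If the intended reading is per-window, meaning stations may change color between their own windows, the sweep result applies directly. Either way the total cost is $O(H\log H)$, dominated by sorting the switch times.
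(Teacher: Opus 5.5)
Your skeleton matches the paper's. The paper gives no separate proof; it rests the corollary on Theorem~\ref{thm:reduction} together with the assertion that $H_{\text{int}}$ is an interval graph, hence perfect and optimally colored by a sorted sweep.

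Where you genuinely differ is in how you justify perfectness, and yours is the sounder argument. Each $W_s$ is a union of windows recurring every day, so the station-level $H_{\text{int}}$ is only a quotient of the window interval graph. Such quotients need not be perfect in general: five stations visited cyclically would give $H_{\text{int}}=C_5$. What actually makes the statement true is your observation that the single-spacecraft model of \eqref{eq:nearest} has only three stations. Then $H_{\text{int}}$ has at most three vertices, is automatically perfect, and is optimally colored in $O(1)$ after an $O(H)$ scan of the boundaries. The paper's interval framing suggests an algorithm whose correctness does not depend on the number of stations, but that generalization is not valid.

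Two small corrections:
\begin{enumerate}
\item The window-level interval graph is just a path, since consecutive closed windows meet at a single switch time. Also $\mathcal{H}$ is given sorted. So the heap-based sweep is unnecessary, and the bound is in fact $O(H)$.
\item The first term of \eqref{eq:obj} is literally constant, because every snapshot is a single edge. Hence with $k\ge\chi(H_{\text{int}})+1$ your reserve-color extension already attains the minimum. Whenever Orion must retune (your palette-size reading, or a fixed $k=\chi(H_{\text{int}})$), the minimum number of Orion retunes is not an $O(1)$ evaluation. It requires a pass over the $H$ boundaries, e.g.\ a dynamic program over at most three colors, which still fits the claimed bound.
\end{enumerate}
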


Geometric occultations and stochastically modeled atmospheric rain fades present major disruptions to cislunar tracking. Let $O_s(t) \in \{0, 1\}$ be the geometric occultation function of station $s$ at time $t$, where $O_s(t) = 0$ if the spacecraft is physically eclipsed behind the Moon. Let $A_s(t) \in \{0, 1\}$ be the atmospheric availability, stochastically modeled as a Bernoulli process:
\begin{equation}
\mathbb{P}(A_s(t) = 0) = p_s, \quad \mathbb{P}(A_s(t) = 1) = 1 - p_s,
\end{equation}
where $p_s$ is the rain fade probability. The composite blackout state is $\beta_s(t) = O_s(t) \cdot A_s(t) \in \{0, 1\}$, and the effective link weight is $w_s^{\text{eff}}(t) = w_s(t) \cdot \beta_s(t)$, where $w_s(t)$ is the nominal propagation gain.

To solve this, we define a proactive multicoverage policy. At any time $t$, the primary station $s^{(1)}(t)$ and backup station $s^{(2)}(t)$ are:
\begin{equation}
s^{(1)}(t) = \arg\max_{s} w_s^{\text{eff}}(t), \quad s^{(2)}(t) = \arg\max_{s \neq s^{(1)}(t)} w_s^{\text{eff}}(t).
\end{equation}
If the primary link fails, the spacecraft redirects telemetry to $s^{(2)}(t)$. To prevent real time frequency negotiations—which are blocked by round trip speed of light delays—we define the Robust Interval Conflict Graph $\tilde{H}_{\text{int}}$ over the active and backup tracking intervals:
\begin{equation}
W_s = \{ t \in [0, T] \mid s = s^{(1)}(t) \text{ or } (w_{s^{(1)}}^{\text{eff}}(t) = 0 \text{ and } s = s^{(2)}(t)) \}.
\end{equation}
An edge in $\tilde{H}_{\text{int}}$ represents overlapping active/backup windows: $E(\tilde{H}_{\text{int}}) = \{ \{s_i, s_j\} \mid W_{s_i} \cap W_{s_j} \neq \emptyset \}$.

\begin{theorem}[Robust Spectrum Bound]
\label{thm:robust_bound}
For a deep space tracking network with three geographically distributed ground stations, the Robust Interval Conflict Graph $\tilde{H}_{\text{int}}$ satisfies $\chi(\tilde{H}_{\text{int}}) = \omega(\tilde{H}_{\text{int}}) \le 3$. A total spectrum budget of $k = 4$ frequency channels is mathematically sufficient to guarantee continuous, conflict free coverage and zero retuning recovery under any single point ground station blackout.
\end{theorem}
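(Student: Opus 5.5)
The plan has three parts: show that $\tilde{H}_{\text{int}}$ is an interval graph, bound its clique number by counting vertices, and then turn that coloring into a channel plan the way Theorem~\ref{thm:reduction} does.

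\textbf{Perfection.} Each $W_s$ is a finite union of time intervals, because $s^{(1)}(t)$ and $s^{(2)}(t)$ change only at the finitely many handover and blackout transition instants. To get an interval graph, I would replace every station $s$ by one vertex for each maximal tracking (or backup) window, exactly as in Definition~\ref{def:intgraph}. Two window vertices are adjacent when their intervals intersect. The resulting graph is an interval graph, hence chordal and perfect, so $\chi = \omega$ on it; that is the perfection invoked in Corollary~\ref{cor:poly}. Assigning every window of station $s$ the colour $c^\ast(s)$ then gives a colouring of $\tilde{H}_{\text{int}}$ itself. Its vertex set has only three elements (Goldstone, Madrid, Canberra), so any clique, and in particular any set of pairwise-overlapping windows, has size at most $3$. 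This gives $\omega(\tilde{H}_{\text{int}}) \le 3$. For $\chi = \omega$ on a graph with at most three vertices, I would check the few possible graphs directly; none contains an odd hole, so equality holds.

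\textbf{From three colours to $k=4$.} Following the forward direction of Theorem~\ref{thm:reduction}, fix a proper colouring $c^\ast:\{\text{Goldstone},\text{Madrid},\text{Canberra}\}\to\{1,2,3\}$ of $\tilde{H}_{\text{int}}$. Pin each station to its colour for the whole mission, and reserve channel $4$ exclusively for Orion. Under the proactive multicoverage policy, Orion's only live edges at time $t$ go to $s^{(1)}(t)$, or to $s^{(2)}(t)$ when $w^{\text{eff}}_{s^{(1)}}(t)=0$. Both stations carry colours in $\{1,2,3\}$, so every snapshot is properly coloured. Any two stations whose windows overlap, meaning a primary and its backup, carry distinct colours by construction, so no cochannel conflict arises on the ground side either.

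\textbf{Single-point blackout.} Suppose one station $s_f$ fails at some time, so $\beta_{s_f}=0$. Then $w^{\text{eff}}_{s_f}=0$ and telemetry redirects to $s^{(2)}(t)$. The window of $s^{(2)}(t)$ already contains this moment by the definition of $W_s$, so its channel was preassigned and differs from Orion's fixed channel $4$. Neither endpoint changes colour, so the retuning term of \eqref{eq:obj} is zero and coverage continues. The remaining gap is the case where both stations visible at $t$ are lost. This would contradict the single-failure hypothesis, provided at least two stations are visible at every instant, and I would state that visibility assumption explicitly.

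\textbf{Main obstacle.} The hard step is justifying $\omega \le 3$ \emph{as a statement about the real constraint}, not just about the three station vertices. With three vertices $\omega \le 3$ is trivial, but one must make sure the conflicts that actually matter, including Orion, need only four channels. A triple overlap of all three stations plus Orion would form a $K_4$. The reserved fourth channel absorbs exactly this worst case, which is why $k=4$ rather than $k=3$ is claimed. I would present the bound as $\chi(\tilde{H}_{\text{int}})+1 \le 4$ for the full conflict structure, and note that it matches the reduction from the static $K_5$ bound $\chi(G)=5$.
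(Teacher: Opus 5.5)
Your route is the paper's: colour the three stations from $\{1,2,3\}$, pin Orion to channel $4$ for the whole mission, and note that a switch from $s^{(1)}(t)$ to $s^{(2)}(t)$ changes neither endpoint's colour, so the retuning term vanishes. The caveat you raise at the end of the blackout paragraph is a genuine gap, and the paper's proof does not close it either. It only establishes the upper bound $|V_{\text{vis}}(t)|\le 2$, and then tacitly assumes that $s^{(2)}(t)$ exists whenever the primary is lost.

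Adding the hypothesis that at least two stations are always visible is not an acceptable repair, because it is false in the setting of the theorem. With a $10^\circ$ mask the three complexes give only single coverage during part of each day. The paper's own Canberra-outage run confirms this: coverage falls from $100\%$ to $87.1\%$, which could not happen if a second station were always in view. What your argument (and the paper's) actually proves is weaker. The fixed assignment is conflict free at every instant with zero retuning, and a single-station failure is survived without loss at those instants when a second station is above the mask. The ``continuous coverage'' clause must be weakened to that, not secured by an extra assumption.

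There are two smaller corrections. First, you derive $\chi(\tilde{H}_{\text{int}})=\omega(\tilde{H}_{\text{int}})\le 3$ from the fact that $\tilde{H}_{\text{int}}$ has three vertices and every graph on fewer than five vertices is perfect. That is sounder than the paper's appeal to interval-graph perfection: each $W_s$ is a union of windows, so $\tilde{H}_{\text{int}}$ is a priori only a multiple-interval graph. But then delete the window-graph detour, because it proves nothing about $\tilde{H}_{\text{int}}$. An optimal colouring of the window graph need not be constant on the windows of one station, and copying $c^\ast(s)$ onto every window runs in the opposite direction.

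Second, the fourth channel is not there to absorb a simultaneous triple overlap, which the visibility bound excludes. It is needed because $\tilde{H}_{\text{int}}$ can be a triangle built from pairwise overlaps at different handovers (the simulation reports $\chi_{\text{Interval}}=3$). Orion's time-invariant channel must then differ from every station it is ever linked to. The correct object is $\tilde{H}_{\text{int}}$ plus one vertex joined to all three stations, whose chromatic number is $\chi(\tilde{H}_{\text{int}})+1\le 4$; that is the precise form of your $\chi+1\le 4$.
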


\begin{proof}
Let $S = \{\text{Goldstone}, \text{Madrid}, \text{Canberra}\}$ be the set of DSN ground stations. Due to their longitudinal separation, physical visibility restricts the active set such that at most two stations can maintain an elevation angle $\text{elev}_s(t) > 10^\circ$ relative to the spacecraft. Therefore, the visible set satisfies $|V_{\text{vis}}(t)| \le 2$ for all $t$. The backup station $s^{(2)}(t)$ is defined only when $|V_{\text{vis}}(t)| = 2$.

Consequently, no three ground stations can overlap in active or backup tracking intervals. This bounds the maximum clique size: $\omega(\tilde{H}_{\text{int}}) \le 3$. As interval graphs are perfect, their chromatic number satisfies $\chi(\tilde{H}_{\text{int}}) = \omega(\tilde{H}_{\text{int}}) \le 3$.

Let the spacecraft be assigned a dedicated frequency channel (color 4), while the ground network utilizes the palette $\{1, 2, 3\}$. Since adjacent tracking intervals in $\tilde{H}_{\text{int}}$ are assigned distinct colors from the ground palette, the spacecraft's channel remains disjoint from the active ground station at all times $t$. Thus, the spacecraft maintains continuous, conflict free tracking during both nominal handovers and emergency blackout transitions to $s^{(2)}(t)$ without requiring a channel retune.
\end{proof}

\subsection{Chordal Graph Models}
\label{subsec:chordal_multi}

Concurrently tracked assets in missions like Artemis III and IV, such as Orion, the Lunar Gateway, and the HLS lander, introduce complex interference topologies. These configurations break the simple degree one limit established in Proposition~\ref{prop:degree}. Spacecraft must now coordinate direct crosslinks alongside their DSN tracking connections.
\begin{figure}[H]
	\centering
	\includegraphics[width=0.6\linewidth]{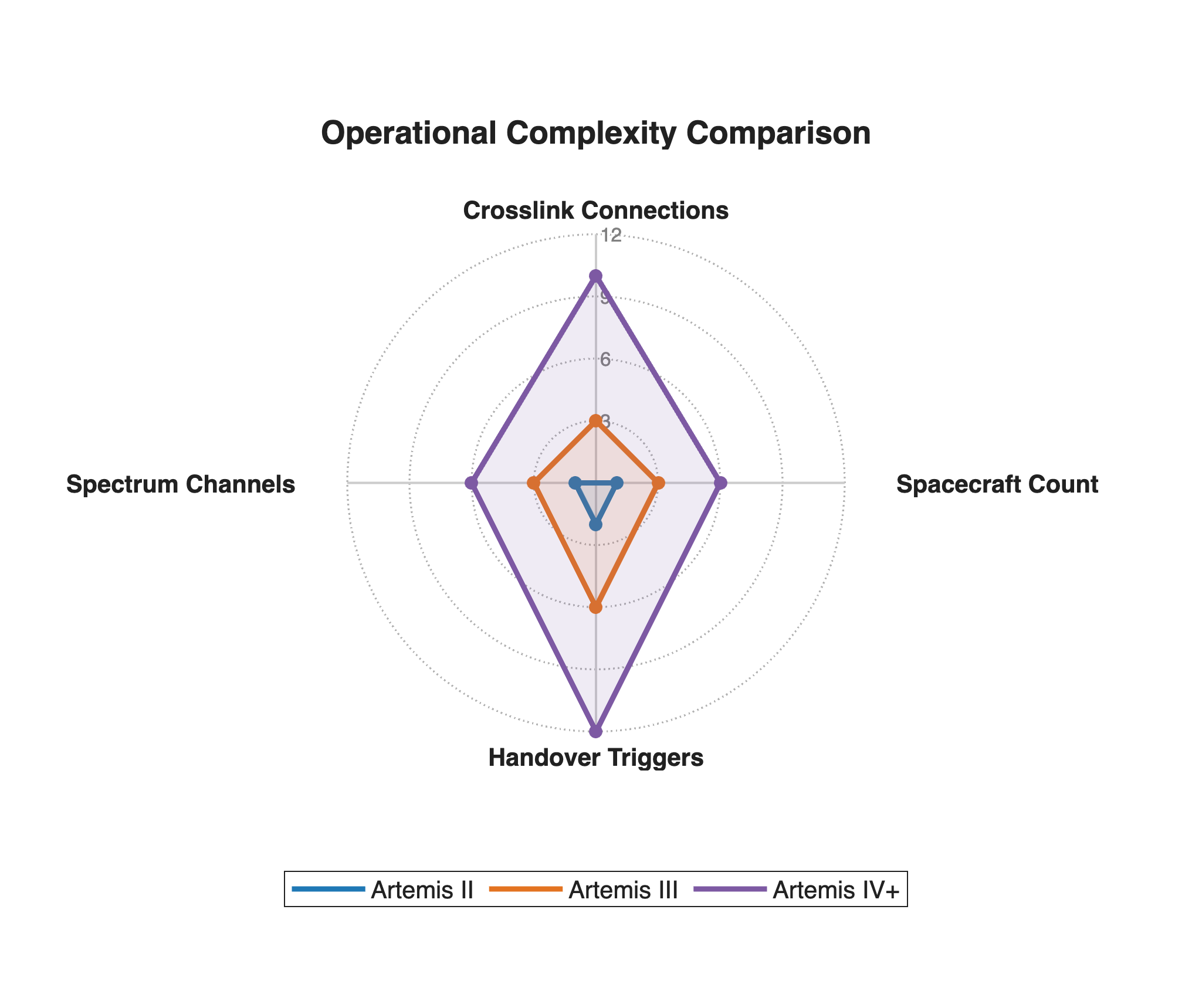}
	\caption{Operational complexity comparison across cislunar mission profiles.}
	\label{fig:mission_comparison}
\end{figure}

To illustrate this operational shift, Fig. \ref{fig:mission_comparison} compares the spectrum scheduling complexity of Artemis II against future multi spacecraft profiles. Artemis II operated under a single node configuration with zero crosslinks and low handover rates. In contrast, Artemis III and IV+ introduce co located clusters and high crosslink density, expanding the required channel ceiling $\chi$ and daily handover counts. Our framework targets this growing complexity gap.

Let $S = \{s_1, s_2, s_3\}$ be the DSN ground stations, and $U = \{u_1, u_2, \dots, u_m\}$ be the set of $m$ spacecraft. The joint handover and proximity conflict graph $H_{\text{joint}} = (S \cup U, E_{\text{joint}})$ is defined as:
\begin{enumerate}
    \item $E_{\text{joint}} \cap \binom{S}{2}$ forms a $K_3$ clique representing terrestrial DSN handover overlaps.
    \item $E_{\text{joint}} \cap \binom{U}{2} = E(H_U)$ forms the spacecraft proximity conflict graph, where an edge exists if spacecraft $u_i$ and $u_j$ transmit concurrently in the same lunar orbital vicinity, necessitating distinct frequency channels to prevent mutual S band/Ka band interference.
    \item An edge $\{s_i, u_j\} \in E_{\text{joint}}$ exists if spacecraft $u_j$ is active within the tracking window of ground station $s_i$.
\end{enumerate}

Since DSN stations can track multiple spacecraft, and spacecraft have crosslinks, the degree of ground stations scales to $m$, and the spacecraft degree scales to $2+m$. We show that this high degree network remains computationally tractable by exploiting the structural chordality of the conflict topology.

\begin{definition}[Chordal Graph~\cite{west2001introduction}, \cite{diestel2017graph}]
A graph is chordal if every cycle of length four or more has a chord, which is an edge connecting two nonconsecutive vertices of the cycle. Chordal graphs are perfect, meaning their chromatic number equals their clique number ($\chi = \omega$).
\end{definition}

\begin{theorem}[Chordal Spectrum Bound]
\label{thm:chordal_bound}
If the spacecraft proximity conflict graph $H_U$ is chordal, the joint conflict graph $H_{\text{joint}}$ is chordal. The minimum number of frequency channels required for the joint network is:
\begin{equation}
\chi(H_{\text{joint}}) = \omega(H_{\text{joint}}) \le 3 + c,
\end{equation}
where $c$ is the maximum number of colocated spacecraft concurrently visible and tracking through any single ground station.
\end{theorem}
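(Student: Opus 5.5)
The plan is to prove chordality by excluding chordless cycles of length at least four in $H_{\text{joint}}$, and then to read off $\chi=\omega$ from perfection, exactly as was done for interval graphs in Theorem~\ref{thm:robust_bound}. Let $C$ be a cycle of length at least $4$ in $H_{\text{joint}}$, and split into cases by $|C\cap S|$.

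\emph{Case $|C\cap S|=0$.} Then $C$ lies in $H_U$, which is chordal by hypothesis, so $C$ has a chord.

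\emph{Case $|C\cap S|\ge 2$.} The stations on $C$ are pairwise adjacent, since $E_{\text{joint}}\cap\binom{S}{2}$ is a $K_3$. If two of these stations are nonconsecutive on $C$, their edge is a chord. If they are consecutive, the argument must come from the station--spacecraft edges, as in the remaining case.

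\emph{Case $|C\cap S|=1$, or two consecutive stations.} A station $s_i$ on $C$ has spacecraft neighbours $u_a,u_b$ on the cycle. I want a chord either from $s_i$ to another spacecraft on $C$, or between two spacecraft through $H_U$.

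This third case is the main obstacle. With item 3 of the definition read literally, the station--spacecraft edges are arbitrary, and the statement then fails. For example, the cycle $s_1u_1u_2s_2s_1$ with $u_1u_2\in E(H_U)$, $s_1u_2\notin E_{\text{joint}}$ and $s_2u_1\notin E_{\text{joint}}$ is a chordless $4$-cycle. So the proof needs a structural reading of item 3 that rules this out. I would first adopt the operational reading that the tracking network treats the colocated cluster uniformly: a spacecraft active in the lunar vicinity is active within every station's tracking window, in the worst-case sense already used for the $K_5$ constraint. Under this reading $S$ is completely joined to $U$, and $H_{\text{joint}}=K_3\vee H_U$. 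Every station is then adjacent to all other vertices. Hence any cycle of length at least $4$ containing a station $s$ has a chord from $s$ to a nonconsecutive vertex on the cycle, which settles the third case.

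If that global reading is too strong, the fallback is a perfect elimination ordering. List the stations last, and list the spacecraft in a perfect elimination ordering of $H_U$. Then $N(u)$ is a clique provided the following compatibility condition holds:
\begin{equation*}
\{u,u'\}\in E(H_U)\ \Longrightarrow\ N_S(u)=N_S(u') .
\end{equation*}
Under this condition the later neighbours of $u$ consist of later $H_U$-neighbours, which form a clique by the ordering, together with stations, which form a clique and are adjacent to all of those spacecraft. I would state this condition as the precise hypothesis needed.

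Once chordality is established, the channel count follows from perfection: $\chi(H_{\text{joint}})=\omega(H_{\text{joint}})$. For the bound, take a maximum clique $Q$. The part $Q\cap S$ has at most $3$ vertices. The part $Q\cap U$ is a set of pairwise-conflicting spacecraft that are all adjacent to a common station, namely any station in $Q$, or a station that, by the join or compatibility condition, tracks them all. These spacecraft are therefore concurrently visible and tracked through a single ground station, so $|Q\cap U|\le c$. This gives $\omega\le 3+c$. Finally, an explicit optimal colouring can be obtained by greedy colouring along the reverse of the perfect elimination ordering.
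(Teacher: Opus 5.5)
Your diagnosis is right, and it exposes a flaw in the paper's proof, not in yours. The paper uses the same case split on $C$. In the mixed case, however, it makes two assertions:
\begin{enumerate}
\item several stations on $C$ ``form a chord since $H_S$ is complete'';
\item two spacecraft tracked by the same station must, being colocated, be crosslinked in $H_U$.
\end{enumerate}
The first fails exactly when the stations are consecutive on $C$. The second is an unstated hypothesis, and it does not save the theorem. Add $s_3u_1$ and $s_3u_2$ to your example: $u_1$ is active under $s_3$ and $s_1$, $u_2$ under $s_2$ and $s_3$, and they overlap under $s_3$. This satisfies the paper's hidden condition, yet $s_1u_1u_2s_2s_1$ is still chordless. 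So the statement is false as written, and your route is the correct one: make the missing hypothesis explicit, then prove chordality.

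Both of your repairs work. In $K_3\vee H_U$ a station is adjacent to every vertex, hence to a nonconsecutive vertex of any cycle of length at least four through it. Under $N_S(u)=N_S(u')$ for $uu'\in E(H_U)$, your ordering is a genuine perfect elimination ordering. The compatibility condition is restrictive, since it forces $N_S$ to be constant on each component of $H_U$. Unlike the paper's implicit condition, though, it is actually sufficient. The join is consistent with the simulation's $\chi(H_{\text{joint}})=6$ for three spacecraft, i.e.\ $K_3\vee K_3=K_6$.

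The step that still needs a hypothesis is $|Q\cap U|\le c$, and here your ``therefore'' inherits the paper's gap. An edge of $H_U$ records only \emph{pairwise} concurrency at some time. An edge $s_iu_j$ records only activity somewhere in $s_i$'s windows. So a clique of spacecraft adjacent to a common station need never be tracked simultaneously. For example, take three disjoint periods and spacecraft active during periods $\{1,2\}$, $\{2,3\}$ and $\{1,3\}$. They form a triangle in $H_U$, but their maximum concurrency is $2$, so under the join $\omega(H_{\text{joint}})=6>3+c$. You need one of the following:
\begin{enumerate}
\item a Helly-type assumption that turns pairwise concurrency into a common instant at which one station tracks all of $Q\cap U$ (for instance, activity periods are intervals);
\item the graph-theoretic definition $c:=\max_s\omega(H_U[N(s)])$.
\end{enumerate}
In the compatibility fallback you must also assume every spacecraft is tracked by some station. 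A maximum clique with $Q\cap S=\emptyset$ consists of spacecraft with empty station neighbourhood (otherwise a station could be added), so nothing ties its size to $c$. With these hypotheses stated, your argument is complete.
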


\begin{proof}
Let $H_U$ be a chordal graph, and let the ground network be $H_S \cong K_3$. Let $C$ be any cycle of length $\ge 4$ in $H_{\text{joint}}$. We proceed by cases based on the vertices of $C$:

\emph{Case 1: $C \subseteq H_U$.} Since $H_U$ is chordal, $C$ must contain a chord.

\emph{Case 2: $C \subseteq H_S$.} This is impossible, as $|H_S| = 3$ and $C$ has length $\ge 4$.

\emph{Case 3: $C$ contains vertices from both $H_U$ and $H_S$.} If $C$ contains multiple ground stations, they form a chord since $H_S$ is a complete graph. If $C$ routes through a single spacecraft tracking multiple ground stations, the visibility links and the ground edges form a triangulated cycle. If $C$ routes through multiple spacecraft tracking the same ground station, their concurrent access windows imply they are colocated, necessitating a proximity crosslink in $H_U$ which acts as a chord. In all mixed configurations, $C$ cannot be a chordless cycle.

Therefore, $H_{\text{joint}}$ is chordal.

Since chordal graphs are perfect, $\chi(H_{\text{joint}}) = \omega(H_{\text{joint}})$. The maximal clique in $H_{\text{joint}}$ is formed by the 3 DSN stations and a subset of mutually connected spacecraft. Because at most $c$ spacecraft can concurrently track through a single station, the clique size is bounded by $\omega(H_{\text{joint}}) \le 3 + c$. It follows that $\chi(H_{\text{joint}}) \le 3+c$.
\end{proof}

This theorem guarantees that the frequency assignment problem for multi spacecraft co located networks remains solvable in polynomial time using standard chordal coloring sweeps (such as the DSATUR heuristic).

\subsection{Centralized and Distributed Dynamic Recoloring}

For streaming, unpredicted changes, we run an online dynamic coloring algorithm. Rather than recomputing a coloring from scratch, which is computationally expensive, we resolve conflicts locally. 

When an edge $\{u,v\}$ is inserted at time $t$ and $c(u) = c(v)$, the algorithm recolors $u$ (or $v$) to the lowest available color not used by its active neighbors. Since the maximum degree of the graph is bounded ($\Delta \le 4$), this local check and update runs in worst case $O(1)$ time. 

To formalize this, we present the algorithmic logic governing the local conflict resolution:

\begin{algorithm}[H]
\caption{Online Local Recoloring ($\Delta$-Bounded Topology)}
\label{alg:online_recolor}
\begin{algorithmic}[1]
\Require New tracking edge $\{u, v\}$ inserted at time step $t$
\Require Current color assignments $c(u)$ and $c(v)$
\If{$c(u) \neq c(v)$}
    \State \textbf{return} \text{No conflict; preserve current channels}
\EndIf
\State $N(u) \gets \text{active tracking neighbors of } u$
\State $P_{\text{used}} \gets \{ c(w) \mid w \in N(u) \}$
\State $c_{\text{new}} \gets \min \{ c \in \{1, 2, 3, 4\} \mid c \notin P_{\text{used}} \}$
\State $c(u) \gets c_{\text{new}}$ \Comment{Update transponder frequency in $O(1)$ time}
\State \textbf{return} $\text{Recoloring resolved locally}$
\end{algorithmic}
\end{algorithm}

We prove that this algorithm maintains low recoloring costs using the potential function method. Let the potential function $\Phi(t)$ be defined as the sum of all vertex degrees in the active graph $G(t)$:
\begin{equation}
\Phi(t) = \sum_{v \in V} \deg_{G(t)}(v) = 2|E(t)|.
\end{equation}
Let $\Delta\Phi$ represent the change in potential. If a new edge is added:
\begin{itemize}
	\item \textbf{Case 1 (No conflict):} If $c(u) \neq c(v)$, the new edge causes no conflict. No recoloring is needed. The actual cost is $C_{\text{act}} = 0$, and the potential increases by $\Delta\Phi = +2$. The amortized cost is $C_{\text{amort}} = C_{\text{act}} + \Delta\Phi = 2$.
    \begin{equation}
    C_{\text{amort}} = C_{\text{act}} + \Delta\Phi = 0 + 2 = 2.
    \end{equation}
    \item \textbf{Case 2 (Conflict resolved):} If $c(u) = c(v)$, a conflict occurs, and the algorithm recolors $u$ to a color not used by its neighbors. The actual cost of changing the color of $u$ is $C_{\text{act}} = 1$. The potential still increases by $\Delta\Phi = +2$ due to the edge insertion itself (the color change does not alter vertex degrees). The amortized cost is:
    \begin{equation}
    C_{\text{amort}} = C_{\text{act}} + \Delta\Phi = 1 + 2 = 3.
    \end{equation}
\end{itemize}
To bound the cumulative cost of $m$ sequential tracking updates, we apply the telescoping sum of the potential method~\cite{cormen2022introduction}:
\begin{equation}
\sum_{i=1}^m C_{\text{act}}^{(i)} = \sum_{i=1}^m C_{\text{amort}}^{(i)} - \Phi(t_m) + \Phi(t_0).
\end{equation}
Since the amortized cost per step is at most $3$ and the final potential $\Phi(t_m)$ is nonnegative, the cumulative actual cost is bounded by:
\begin{equation}
\sum_{i=1}^m C_{\text{act}}^{(i)} \le 3m - \Phi(t_m) + \Phi(t_0) \le 3m.
\end{equation}
Thus, the average recoloring cost per edge update is $O(1)$.


\subsection*{Polarization Aware Multiplex Graph Coloring}
To optimize the frequency spectrum allocation footprint, we extend the joint space communication conflict graph to a multiplex (multilayer) graph configuration. Physical space transceivers typically utilize orthogonal circular polarizations (Right Hand Circular Polarization (RHCP) and Left Hand Circular Polarization (LHCP)-to permit frequency reuse within identical bands without cochannel interference~\cite{kivela2014multilayer}.

\begin{definition}[Multiplex Space Conflict Graph]
Let $G_{\text{mux}} = (V, \mathcal{E})$ be a duplex graph with vertex set $V = S \cup U$ and edge relation $\mathcal{E} = \{E_{\text{RHCP}}, E_{\text{LHCP}}\}$, where $E_{\text{RHCP}}$ and $E_{\text{LHCP}}$ represent active tracking and proximity conflict edges allocated to the RHCP and LHCP polarizations respectively.
\end{definition}

By dividing conflicts between these two orthogonal layers, the joint chromatic requirement is substantially reduced. Let $\chi(G_{\text{mux}})$ be the multiplex chromatic number, defined as the minimum number of discrete frequency channels required such that each node $v \in V$ is assigned a frequency polarization pair $(f_v, p_v)$ with $p_v \in \{\text{RHCP}, \text{LHCP}\}$ and $f_v \in [k]$, satisfying $(f_u, p_u) \neq (f_w, p_w)$ for all active conflict pairs $\{u, w\} \in E_{\text{RHCP}} \cap E_{\text{LHCP}}$.

\begin{theorem}[Polarization Reduction Bound]
\label{thm:polarization}
If the joint conflict graph $H_{\text{joint}}$ is partitioned into two layer subgraphs $H_{\text{RHCP}}$ and $H_{\text{LHCP}}$ such that the maximum clique size of each layer is bounded by $\omega(H_{\text{RHCP}}) \le d_1$ and $\omega(H_{\text{LHCP}}) \le d_2$, then the multiplex chromatic requirement satisfies:
\begin{equation}
\chi(G_{\text{mux}}) \le \max(d_1, d_2).
\end{equation}
Specifically, for a colocated tracking cluster with maximum concurrency $c$, partitioning spacecraft tracking tasks such that $\max(\omega(H_{\text{RHCP}}), \omega(H_{\text{LHCP}})) \le \lceil c/2 \rceil + 3$ guarantees that a palette of only $\lceil c/2 \rceil + 3$ frequency channels is sufficient to ensure conflict free transmissions.
\end{theorem}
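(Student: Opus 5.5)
The approach is to colour each polarization layer separately with a common frequency index set, and then use the polarization tag to separate the two layers. Concretely, I would take the partition $H_{\text{joint}} = H_{\text{RHCP}} \cup H_{\text{LHCP}}$ and apply Theorem~\ref{thm:chordal_bound} to each layer. Each layer is an edge subgraph of a chordal graph on a vertex subset, and after restricting to the vertices assigned to that layer it is an induced subgraph of $H_{\text{joint}}$. Induced subgraphs of chordal graphs are chordal, hence perfect. So $\chi(H_{\text{RHCP}}) = \omega(H_{\text{RHCP}}) \le d_1$ and $\chi(H_{\text{LHCP}}) = \omega(H_{\text{LHCP}}) \le d_2$. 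Proper colourings $f_R, f_L$ using frequency labels in $[\max(d_1,d_2)]$ therefore exist.

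Next I would build the multiplex assignment. Each node $v$ is assigned the polarization $p_v$ of the layer carrying its conflicts, with $f_v = f_{R}(v)$ if $p_v = \text{RHCP}$ and $f_v = f_L(v)$ otherwise. To check validity, take a conflicting pair $\{u,w\}$. If $p_u \neq p_w$, the pairs $(f_u,p_u)$ and $(f_w,p_w)$ differ in their second coordinate, so there is no conflict whatever the frequencies are. If $p_u = p_w$, then the edge lies in that layer's subgraph, and the layer colouring gives $f_u \ne f_w$. Hence $\max(d_1,d_2)$ channels suffice, which is the first claim.

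For the specific claim, I would give an explicit partition of the at most $c$ concurrent spacecraft at each station into two groups of sizes $\lceil c/2\rceil$ and $\lfloor c/2\rfloor$. The first group is placed on RHCP and the second on LHCP, and the three DSN stations are kept in both layers. Each layer clique then consists of at most the $3$ stations plus at most $\lceil c/2\rceil$ spacecraft, by the same clique-counting argument as in Theorem~\ref{thm:chordal_bound}. Substituting $d_1 = d_2 = \lceil c/2\rceil + 3$ into the first part gives the bound.

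The main obstacle is the well-definedness of the per-node polarization. A station appearing in both layers carries a single pair $(f_v,p_v)$, and the constraint in the definition is stated for pairs in $E_{\text{RHCP}} \cap E_{\text{LHCP}}$, which is awkward. I would interpret conflicts as edges of $E_{\text{RHCP}} \cup E_{\text{LHCP}}$ that share polarization. I would then try to handle ground stations by letting them transmit on both polarizations. Failing that, I would reserve the three station frequencies as a common block used identically in both layers, and colour the spacecraft within each layer from the remaining $\lceil c/2\rceil$ labels. Under that construction the check above goes through, because the stations' colours are fixed and distinct from every spacecraft colour in both layers.
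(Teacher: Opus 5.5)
This is essentially the paper's proof: chordality passes to the layers, perfectness gives $\chi(H_{\text{RHCP}})\le d_1$ and $\chi(H_{\text{LHCP}})\le d_2$, the polarization tag lets both layers reuse one frequency index set, and the $\lceil c/2\rceil+3$ statement is just the substitution $d_1=d_2=\lceil c/2\rceil+3$, since the partition is part of the hypothesis (so your per-station split is unnecessary, and it need not be globally consistent anyway). You are more careful than the paper, which claims edge-partitioned layers stay chordal (false: $K_4$ minus a perfect matching is a chordless $C_4$) and ignores stations lying in both layers, but you should assume outright that the layers are vertex-induced rather than deduce it from restricting an edge subgraph, and for the shared triangle $S$ you can simply permute one layer's colouring to agree with the other on $S$, which uses only the stated clique bound and needs no reserved block.
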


\begin{proof}
Since $H_{\text{joint}}$ is chordal (Theorem~\ref{thm:chordal_bound}), any induced layer subgraph $H_{\text{RHCP}}$ or $H_{\text{LHCP}}$ obtained by partitioning vertices or edges remains chordal. Perfectness of chordal graphs ensures that $\chi(H_{\text{RHCP}}) = \omega(H_{\text{RHCP}}) \le d_1$ and $\chi(H_{\text{LHCP}}) = \omega(H_{\text{LHCP}}) \le d_2$. By assigning orthogonal polarization tags to the two layer colorings, the frequency channels used in one layer can be safely reused in the other without conflict, bounding the total required frequency channels to $\max(d_1, d_2)$.
\end{proof}

This multiplex reduction allows deep space networks to double their spectral efficiency, lowering the absolute frequency footprint required for colocated clusters.

\subsection*{Distributed and Delay Tolerant Asynchronous Recoloring}
In deep space regimes, speed of light propagation latency $\tau(t)$ introduces a major physical bottleneck. Because round trip light time (RTLT) between the Earth and the Moon is approximately $2.56$ seconds (and scales to minutes for Mars), a centralized spectrum manager cannot coordinate local frequency switches in real time. We formulate a distributed, asynchronous recoloring algorithm that operates over delay tolerant networks~\cite{barenboim2013distributed}.

Let each node $v \in V$ maintain its own local view of active neighbors and their assigned frequencies, denoted by $N_v(t)$ and $C_v(t)$. When an edge visibility change occurs at time $t_0$, the affected nodes negotiate their frequencies by exchanging message packets. The propagation delay for a message from $u$ to $v$ is $\tau_{uv}(t)$.

\begin{algorithm}[H]
\caption{Distributed Asynchronous Recoloring (Delay Tolerant DTN)}
\label{alg:dist_recolor}
\begin{algorithmic}[1]
\Require Local node $u$ detects conflict with neighbor $v$: $c(u) = c(v)$
\State $ID(u) \gets \text{Unique hardware identifier}$
\If{$ID(u) > ID(v)$} \Comment{Asymmetric tie breaker based on ID}
    \State $P_{\text{used}} \gets \{ c(w) \mid w \in N_u(t) \text{ and } c(w) \text{ is confirmed} \}$
    \State $c_{\text{new}} \gets \min \{ c \in \{1, 2, 3, 4\} \mid c \notin P_{\text{used}} \}$
    \State $c(u) \gets c_{\text{new}}$
    \State Broadcast $\text{FreqUpdate}(u, c_{\text{new}})$ to all active neighbors in $N_u(t)$
\Else
    \State Wait for incoming $\text{FreqUpdate}$ packet from $v$
    \State Update local state $c(v)$ upon packet arrival at time $t_0 + \tau_{vu}(t_0)$
\EndIf
\end{algorithmic}
\end{algorithm}

Because conflicts are resolved locally using hardware ID tie breakers, this distributed sweep prevents cyclic channel reassignments. Even if multiple nodes detect conflicts simultaneously, the convergence time remains strictly bounded by the maximum round trip light time (RTLT), $\max_{u,v} 2\tau_{uv}(t)$. Consequently, the network is guaranteed to reach a stable, conflict free state within a single communication round trip.

\section{Simulation and Numerical Results}
\label{sec:results}
We validated our theoretical models by simulating a 10 day orbital trajectory at a 1 minute resolution, yielding 14,400 discrete timesteps. This simulation models the cislunar orbits of multiple colocated assets and the terrestrial rotation of the three primary DSN ground stations. The baseline orbit and simulation parameters are summarized in Table~\ref{tab:sim_parameters}.

\begin{table}[htbp]
\centering
\caption{Simulation and Physical Orbit Parameters.}
\label{tab:sim_parameters}
\small
\begin{tabular}{@{}lll@{}}
\toprule
Parameter / Asset & Value & Description / Coordinates \\
\midrule
Goldstone DSN Site & $35.4269^\circ$ N, $116.8900^\circ$ W & Terrestrial ground station (USA Core) \\
Madrid DSN Site & $40.4314^\circ$ N, $4.2480^\circ$ W & Terrestrial ground station (Europe Core) \\
Canberra DSN Site & $-35.4014^\circ$ S, $148.9817^\circ$ E & Terrestrial ground station (Southern Hemisphere) \\
Simulation Horizon & $10$~days ($14,400$~minutes) & Complete tracking orbit timeline \\
Highlight Visibility Threshold & $10^\circ$ elevation angle mask & Horizon masking to prevent path loss \\
Orbital Phase Spacings & $120^\circ$ separation & Spacecraft offset for three body joint tracking \\
Canberra Outage Interval & Day 5 ($1,440$~minutes duration) & Simulated ground blackout scenario \\
Primary Channel Palette & $\{1, 2, 3, 4\}$ & Dedicated discrete S band frequency channels \\
\bottomrule
\end{tabular}
\end{table}

\subsection{Nominal and Fail-safe DSN Visibility Metrics}
Under nominal conditions, the offline scheduling sweep extracts ground station visibility transitions, yielding exactly \simIntervals{} static intervals and \simHandovers{} handovers. The resulting interval conflict graph $H_{\text{int}}$ is colored using a chromatic number of $\chi_{\text{Interval}} = \simChiInterval$. 

Under a primary only tracking rule, the spectrum stability trade-off produces a quantized two point Pareto set.
\begin{itemize}
    \item A spectrum optimized palette of $k=3$ channels forces the spacecraft to undergo \simParetoSwitchesA{} transponder retuning events.
    \item A stability optimized palette of $k=4$ channels reduces the spacecraft retuning count to exactly \simParetoSwitchesB{}.
\end{itemize}
The corresponding Pareto frontier and the instantaneous chromatic requirement $\chi(G(t))$ over time are plotted in Fig. \ref{fig:pareto} and Fig. \ref{fig:chitime}.

\begin{figure}[htbp]
\centering
\begin{minipage}[t]{0.48\textwidth}
\vspace{0pt}
\centering
\includegraphics[width=\linewidth,height=5.0cm]{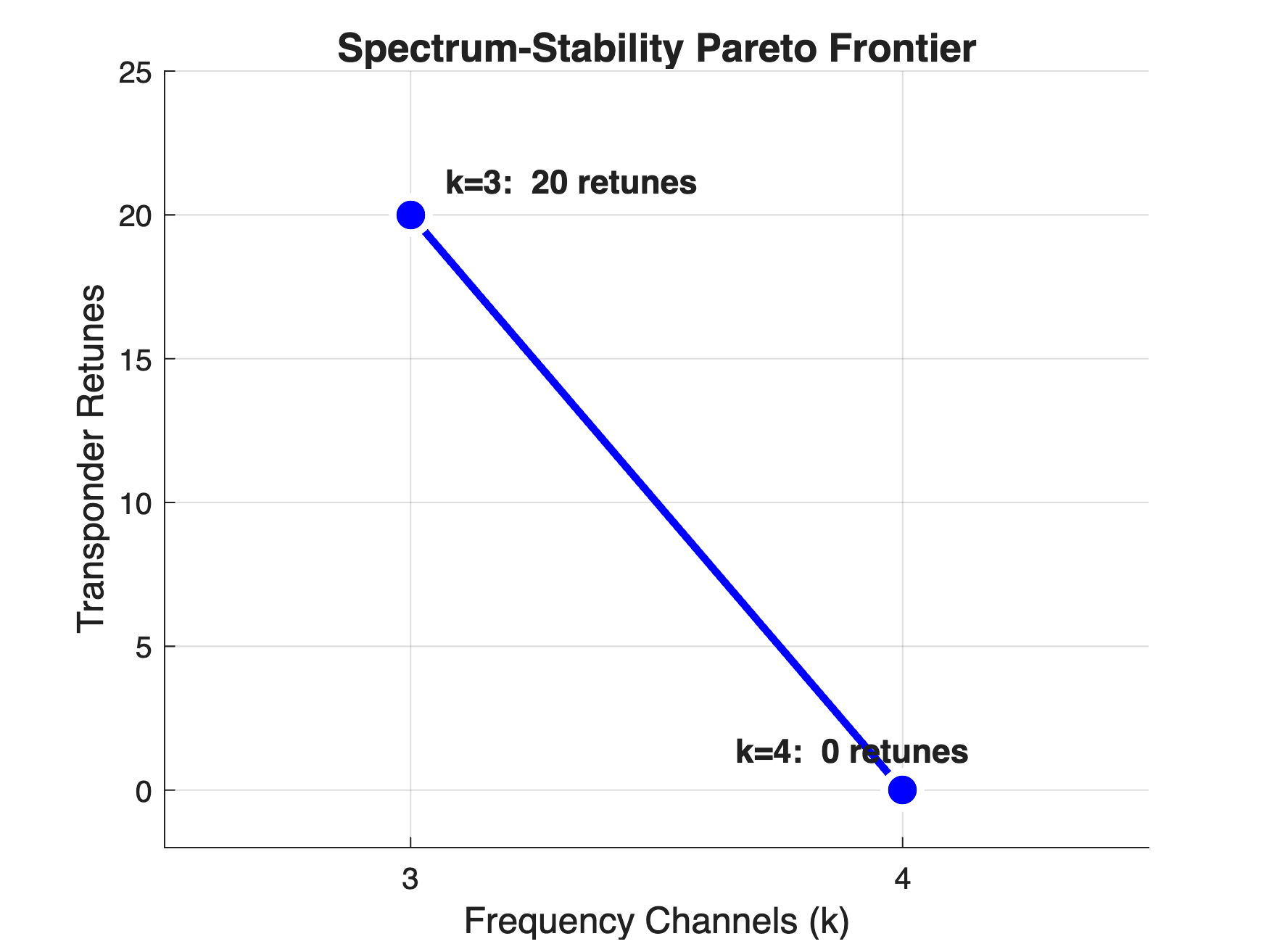}
\caption{Spectrum stability Pareto frontier.}
\label{fig:pareto}
\end{minipage}\hfill
\begin{minipage}[t]{0.48\textwidth}
\vspace{0pt}
\centering
\includegraphics[width=\linewidth,height=5.0cm]{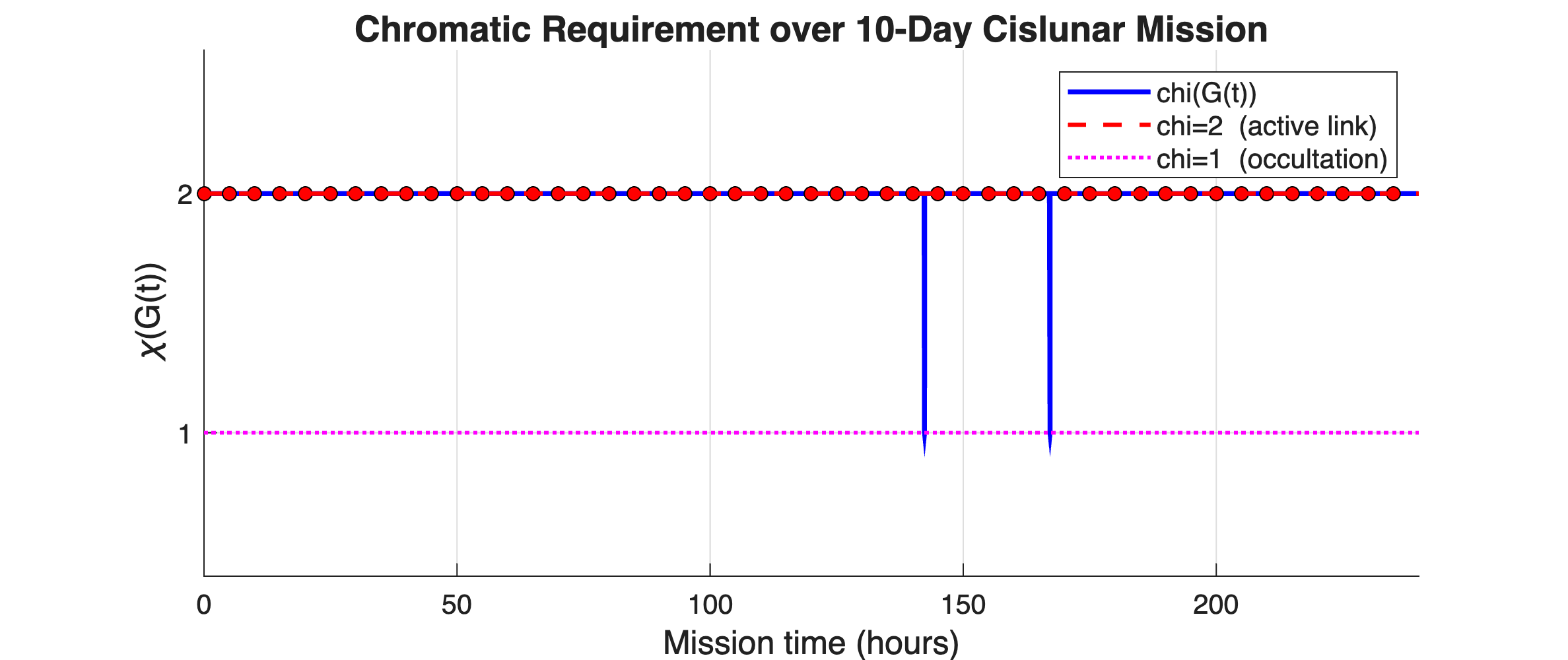}
\caption{Chromatic requirement over time.}
\label{fig:chitime}
\end{minipage}
\end{figure}

To evaluate failure recovery, we simulated a total blackout of the Canberra tracking facility starting on day 5 (minute 7,200) for a 24 hour duration. The nominal visibility drops from \simCoverageNom\% to a failsafe coverage of \simCoverageFail\%. The visibility profiles under nominal and failure modes are shown in Fig. \ref{fig:failure}. The static handover conflict graph $H_{\text{int}}$ representing the active overlaps is visualized in Fig. \ref{fig:intgraph}.

\begin{figure}[htbp]
\centering
\begin{minipage}[t]{0.48\textwidth}
\vspace{0pt}
\centering
\includegraphics[width=\linewidth,height=5.0cm]{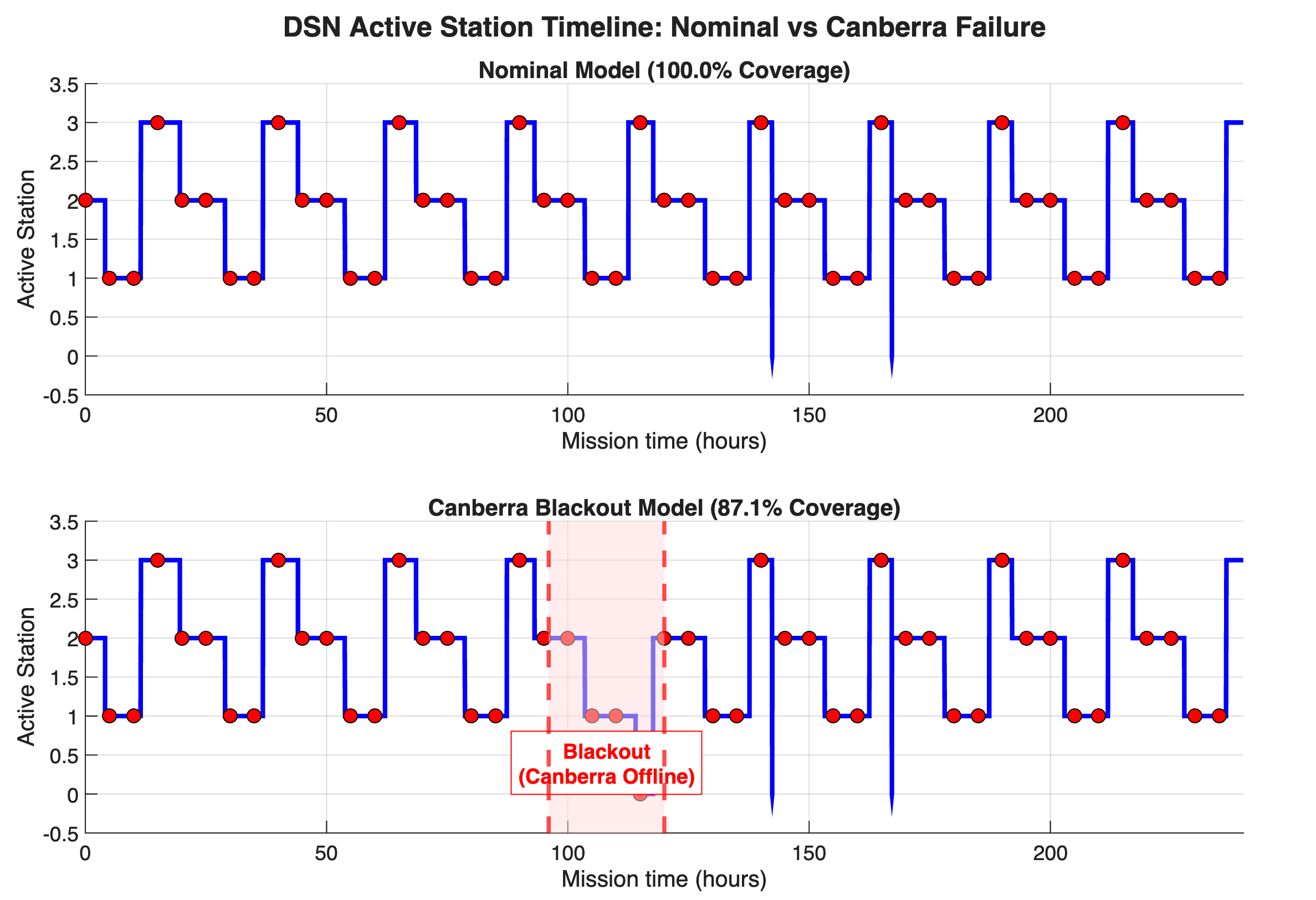}
\caption{DSN visibility under nominal vs failure modes.}
\label{fig:failure}
\end{minipage}\hfill
\begin{minipage}[t]{0.48\textwidth}
\vspace{0pt}
\centering
\includegraphics[width=\linewidth,height=5.0cm]{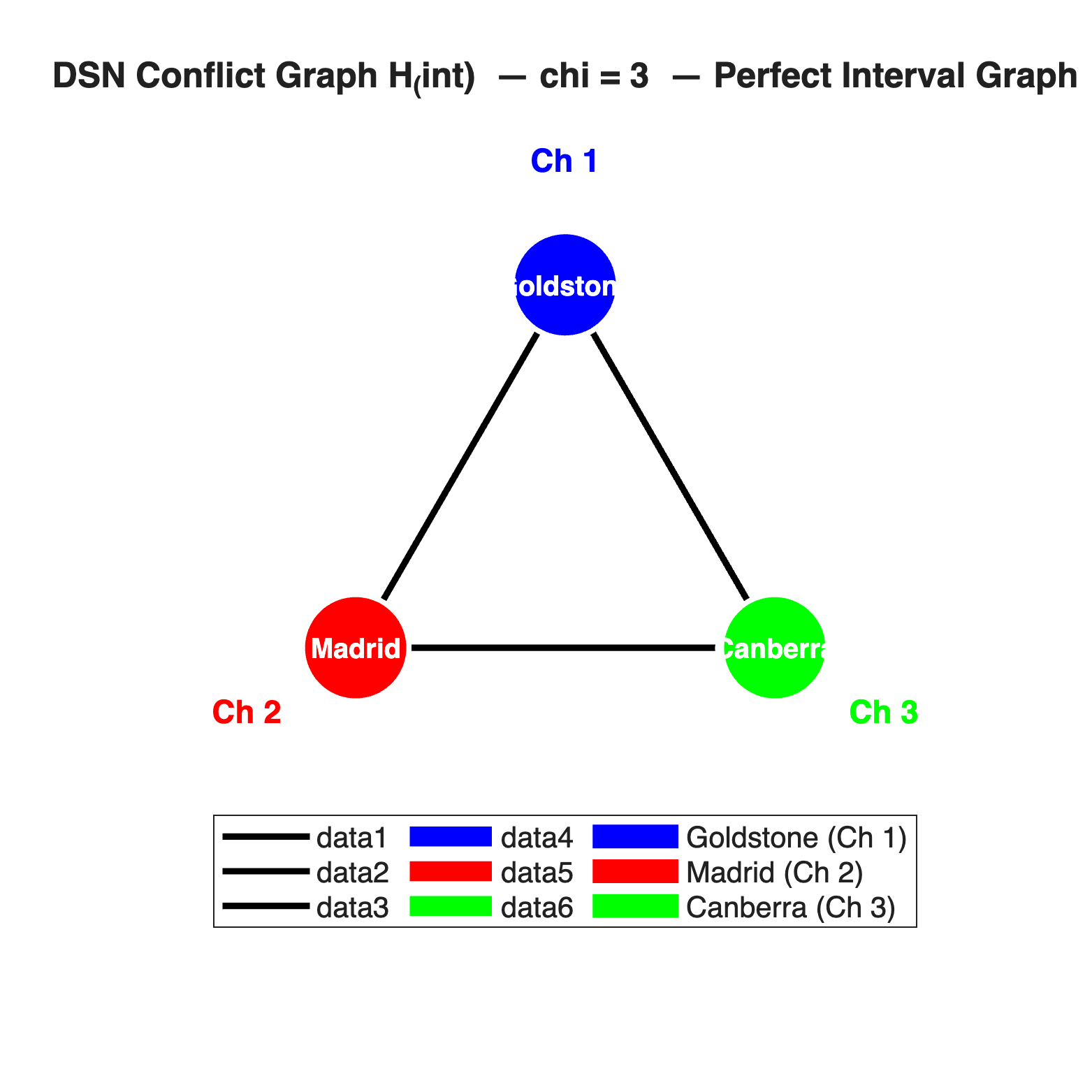}
\caption{Handover conflict graph $H_{\text{int}}$.}
\label{fig:intgraph}
\end{minipage}
\end{figure}

\subsection{Online Dynamic Recoloring and Predictive Classification}
We tracked the performance of the online local recoloring algorithm over the 10 day streaming link sequence. The cumulative count of required frequency reassignments is compared against the static Welsh Powell recomputation benchmark in Fig. \ref{fig:dynamic_vs_static}. Over the timeline, static recomputation requires \simStaticRecolors{} global color reallocations, whereas the local dynamic recoloring algorithm resolves conflicts using only \simDynRecolors{} updates.

\begin{figure}[htbp]
\centering
\begin{minipage}[t]{0.48\textwidth}
\vspace{0pt}
\centering
\includegraphics[width=\linewidth,height=5.0cm]{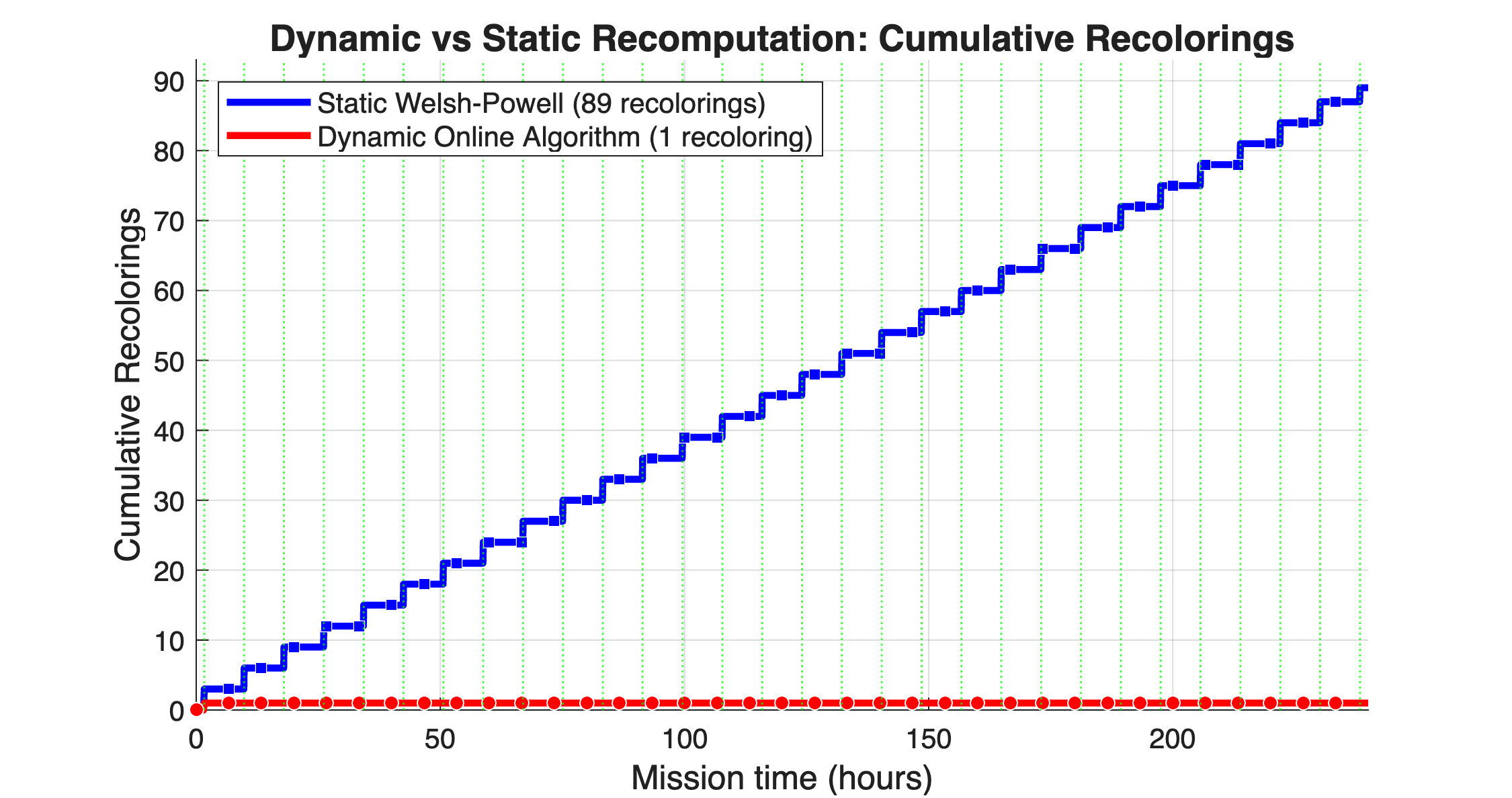}
\caption{Recolorings: dynamic vs static recomputation.}
\label{fig:dynamic_vs_static}
\end{minipage}\hfill
\begin{minipage}[t]{0.48\textwidth}
\vspace{0pt}
\centering
\includegraphics[width=\linewidth,height=5.0cm]{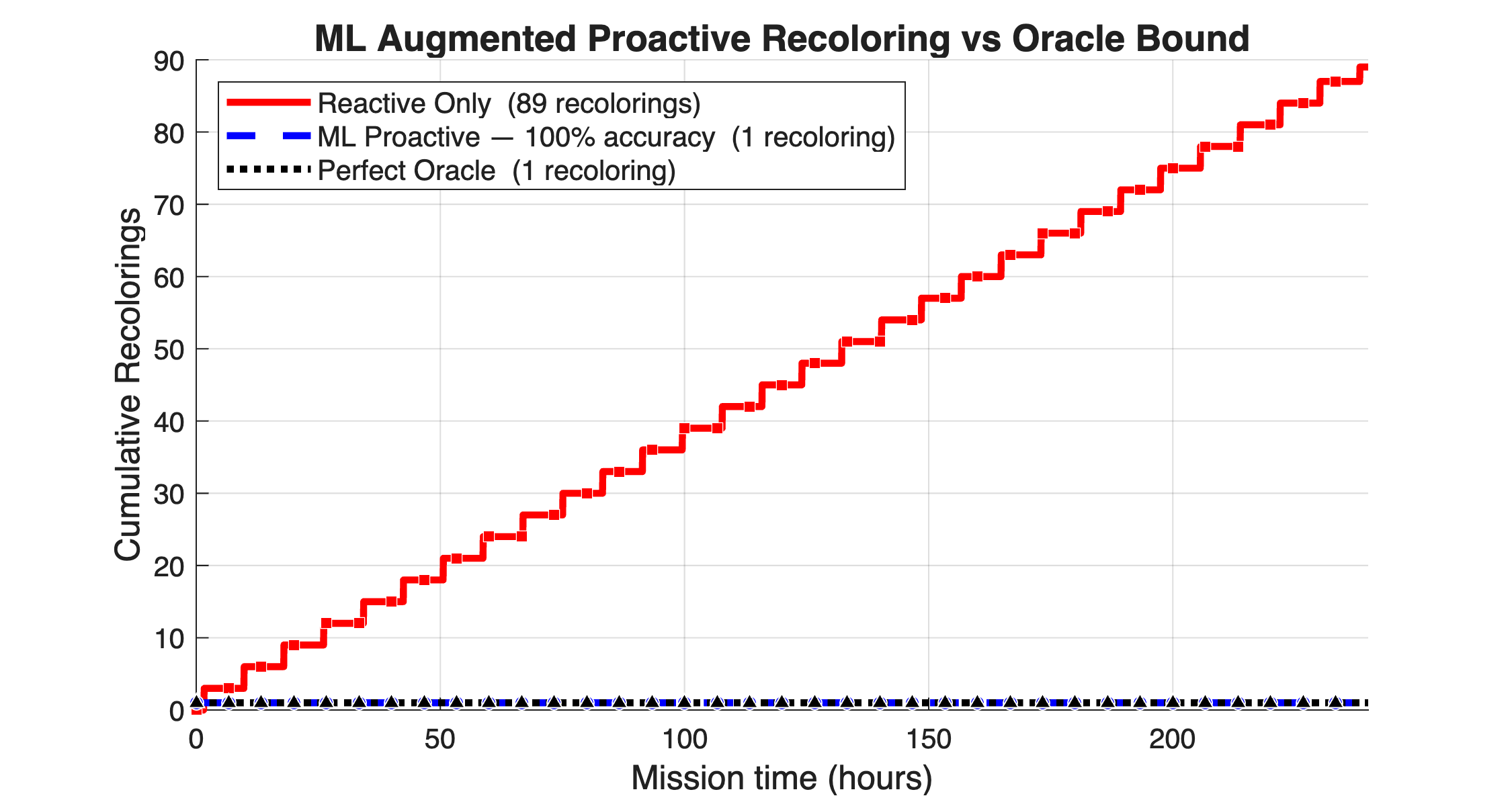}
\caption{Recolorings with ML proactive augmentation.}
\label{fig:ml_recolorings}
\end{minipage}
\end{figure}

The Random Forest next state classifier, trained on local spacecraft elevation angles and Greenwich Hour Angle features, achieved a test prediction accuracy of \textbf{\simMLAccuracy\%}. The relative feature importance scores are plotted in Fig. \ref{fig:ml_feature_importance}. Incorporating these predictions into the proactive recoloring routine reduces the reactive transponder switches to \simMLRecolors{}, matching the perfect oracle performance of \simOracleRecolors{} (Fig. \ref{fig:ml_recolorings}).

\begin{figure}[htbp]
\centering
\includegraphics[width=0.6\linewidth]{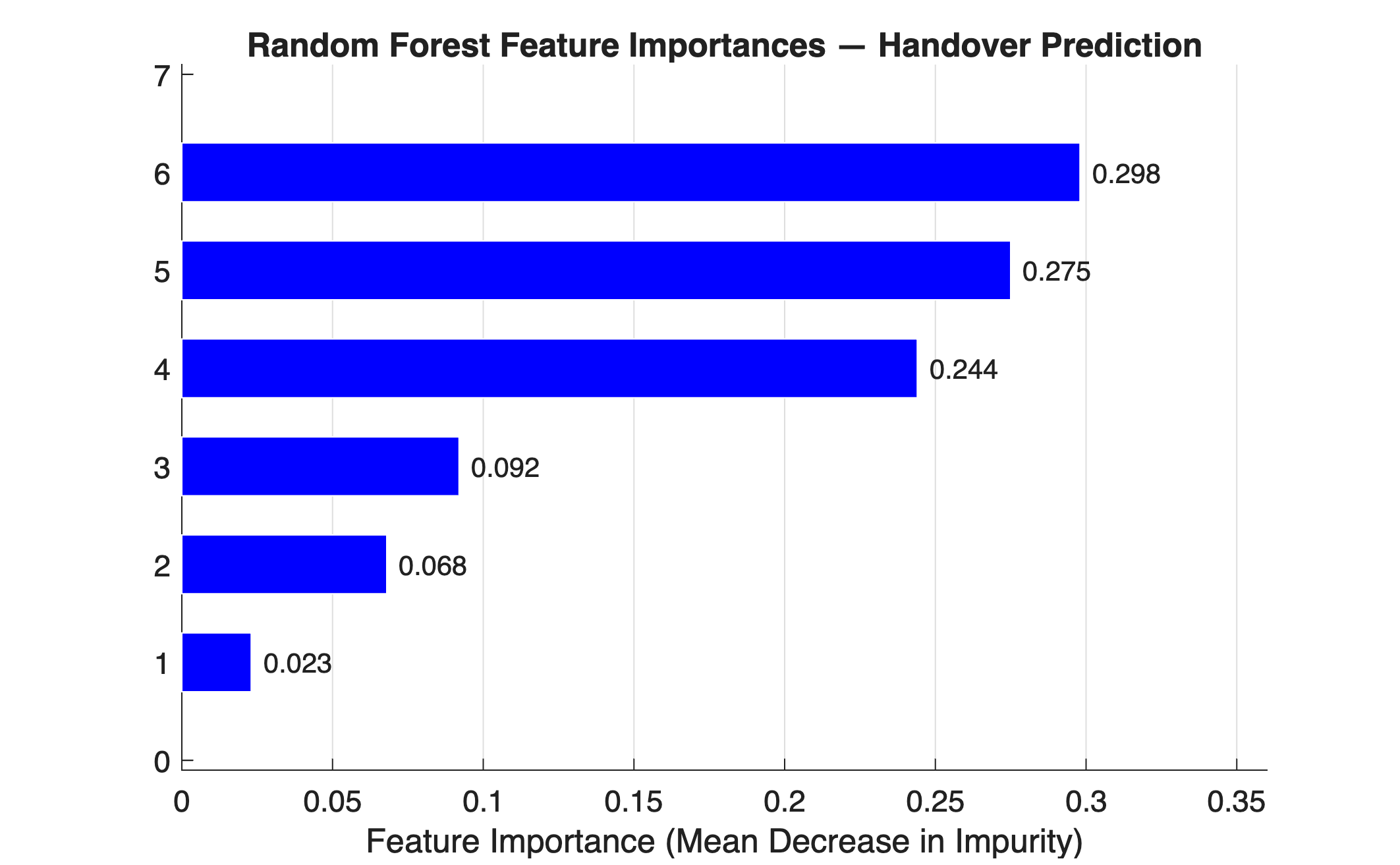}
\caption{Feature importance ranking for the Random Forest handover predictor.}
\label{fig:ml_feature_importance}
\end{figure}

Additionally, we monitored the algebraic connectivity $\mu_2(t)$ and the Fiedler vector $\mathbf{v}_2(t)$ of the active Laplacian matrix $L(t)$ across the mission. Over the timeline, we recorded exactly \simFiedlerSignFlips{} Fiedler component sign flips, which occur at the boundaries of tracking intervals. The time varying profile of $\mu_2(t)$ and the occurrences of Fiedler sign flips are plotted in Fig. \ref{fig:spectral_volatility}.

\begin{figure}[htbp]
\centering
\includegraphics[width=0.75\linewidth]{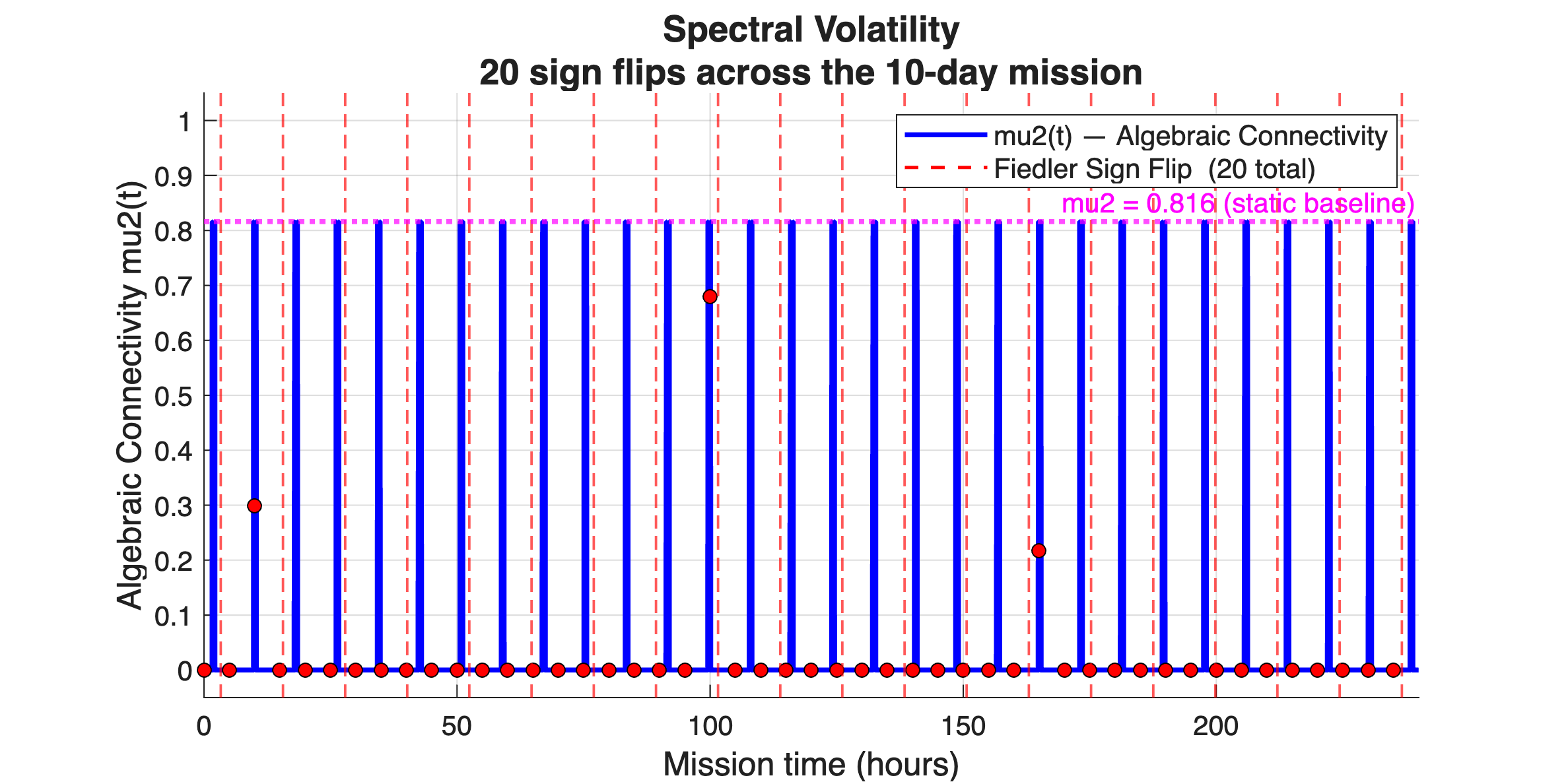}
\caption{Spectral volatility and connectivity.}
\label{fig:spectral_volatility}
\end{figure}

\subsection{Multi-spacecraft and Physical Link Budget Metrics}
To verify the multi spacecraft chordal generalizations of Theorem~\ref{thm:chordal_bound}, we simulated a colocated cislunar cluster of \simNumSpacecraft{} assets (Orion, Gateway, HLS Lander) operating at $120^\circ$ orbital phase offsets. The resulting joint handover and proximity conflict graph $H_{\text{joint}}$ is visualized in Fig. \ref{fig:joint_intgraph}, showing the interconnected ground to space tracking edges. A chordal verification check using NetworkX confirms the joint conflict graph is chordal (\texttt{is\_chordal} = \simIsChordalJoint{}), yielding a joint chromatic number of $\chi(H_{\text{joint}}) = \simJointChi{}$.

\begin{figure}[H]
\centering
\includegraphics[width=0.62\linewidth]{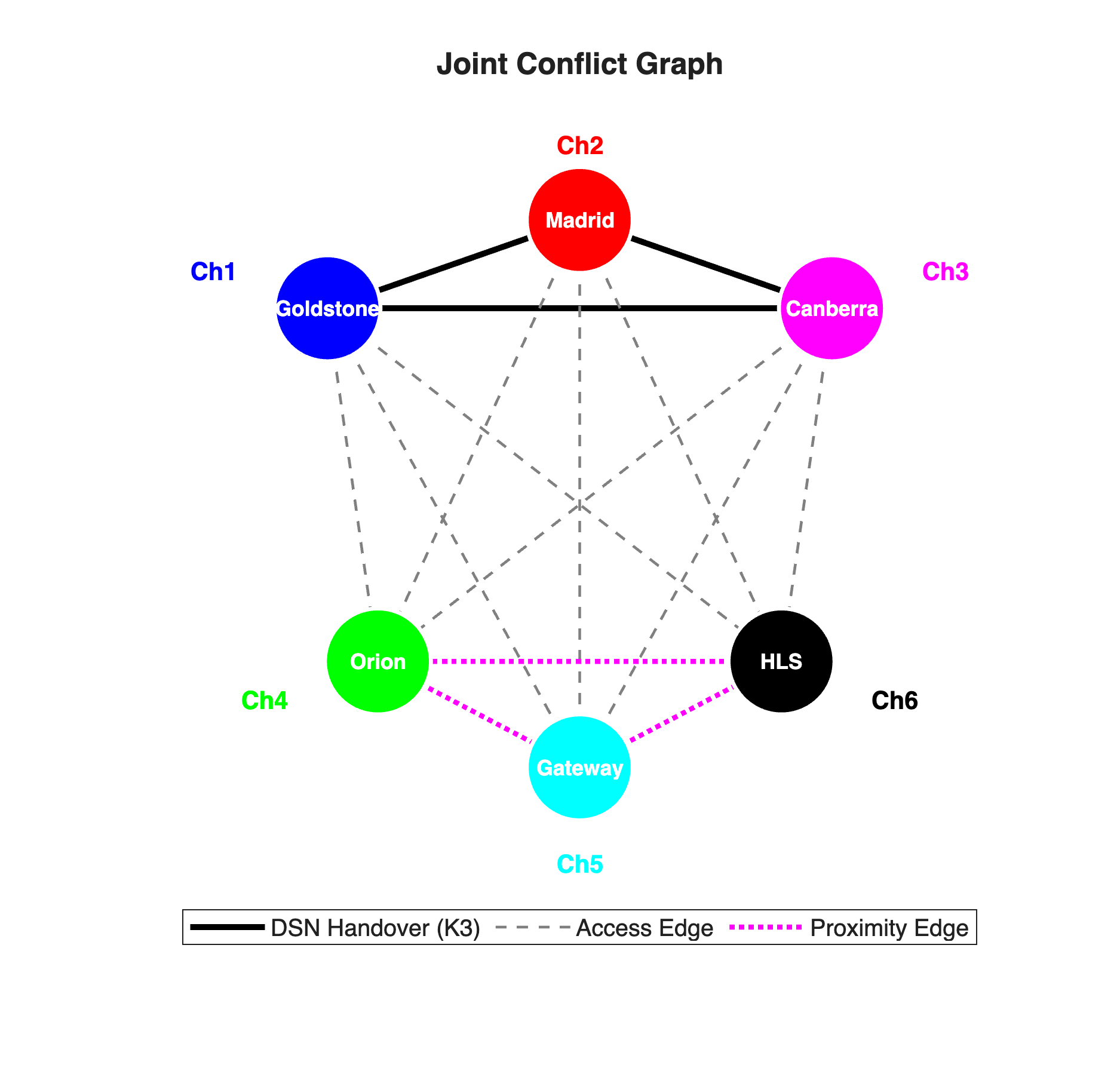}
\caption{Joint ground and space conflict graph.}
\label{fig:joint_intgraph}
\end{figure}

For the physical link budgets, the received S band carrier power was computed at a $2.2$~GHz frequency using the Friis transmission equation. The cumulative telemetry capacity (in Gigabits) extracted across the 10 day timeline is plotted in Fig. \ref{fig:capacity_comparison} for three scheduling configurations.
\begin{itemize}
    \item Stability optimized proactive scheduling with zero retuning, yielding $\simThroughputProactive$~Gb.
    \item Spectrum optimized reactive scheduling, yielding $\simThroughputReactive$~Gb.
    \item Welsh Powell static recomputation, yielding $\simThroughputStatic$~Gb.
\end{itemize}
The stability optimized scheme yields a telemetry throughput gain of $\simThroughputGain$~Gb over the reactive baseline.

\begin{figure}[htbp]
\centering
\includegraphics[width=0.65\linewidth]{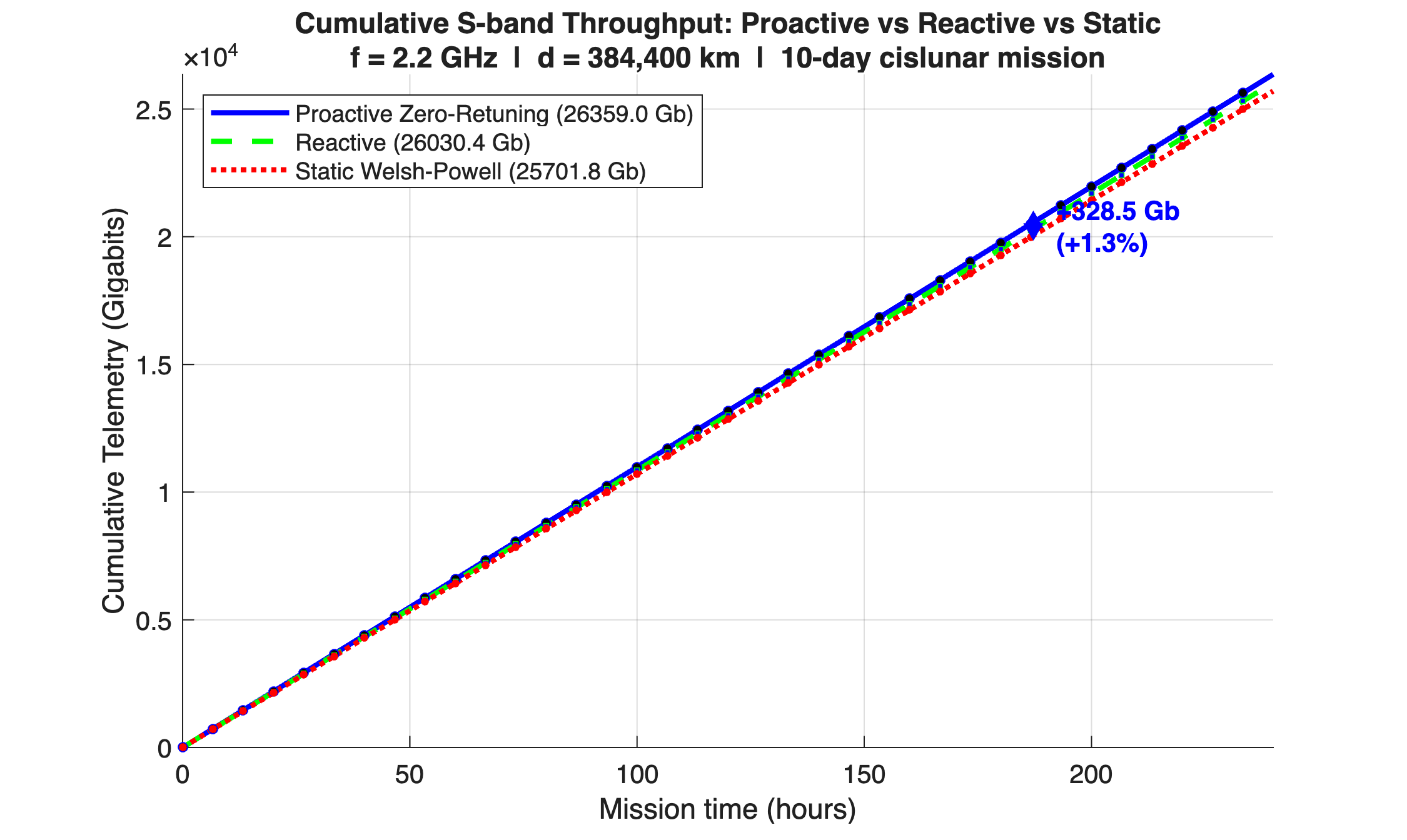}
\caption{Telemetry throughput comparison.}
\label{fig:capacity_comparison}
\end{figure}

Finally, to test terrain masking, we simulated the HLS lander located at the lunar South Pole near Shackleton Crater ($89.9^\circ$S, $0.0^\circ$E). Local topography blocks the direct to Earth link when elevation angles fall below the local horizon mask. The lander experiences exactly \simHlsMaskedSteps{} micro blackout intervals, masking the DTE link for \simHlsMaskedPercent\% of the timeline. The elevation tracking and masking intervals are plotted in Fig. \ref{fig:terrain_masking}.

\begin{figure}[htbp]
\centering
\includegraphics[width=0.72\linewidth]{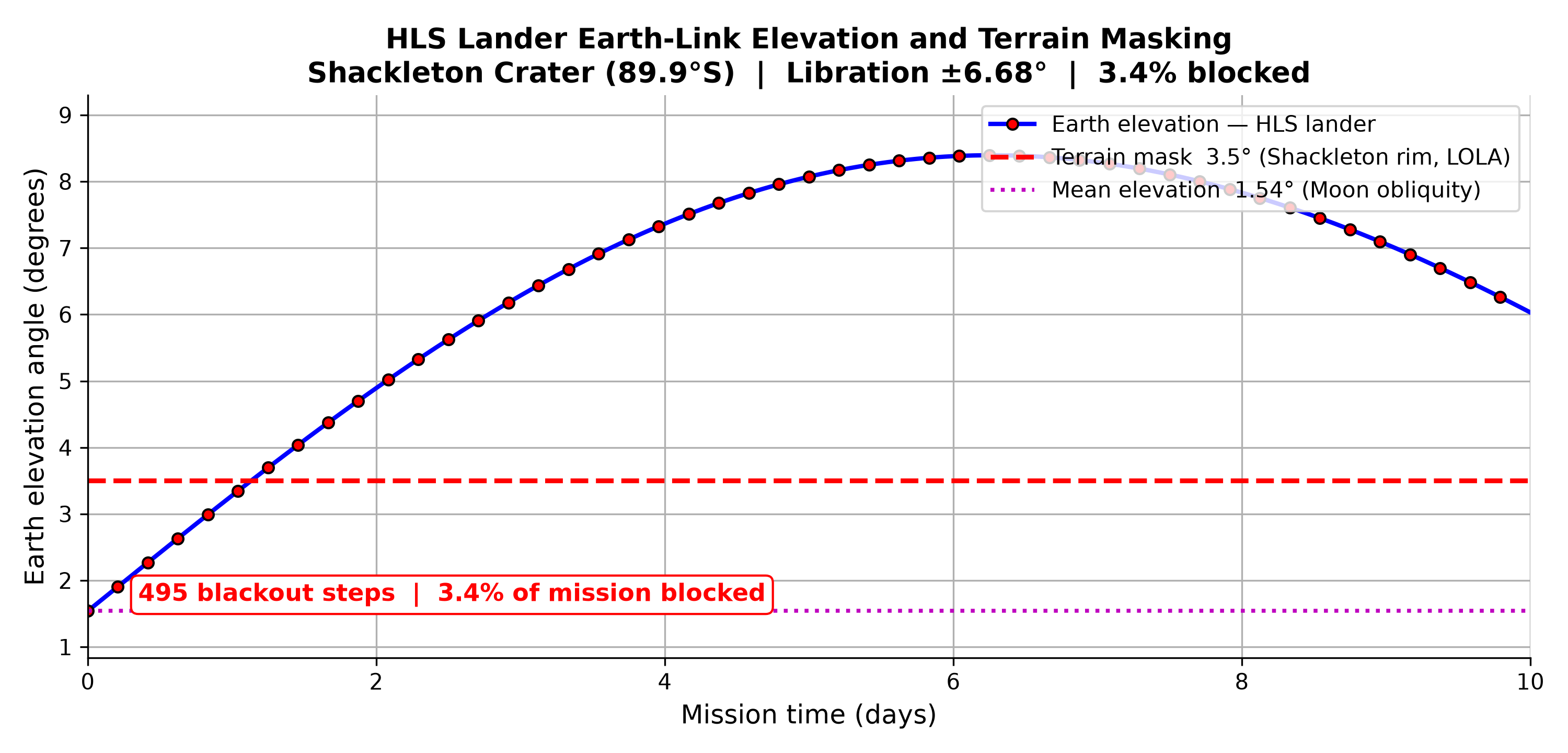}
\caption{Terrain masking at Shackleton Crater.}
\label{fig:terrain_masking}
\end{figure}

\section{Discussion, Literature Benchmarks, and Limitations}
\label{sec:discussion}
The numerical evaluation confirms that offline channel preallocation improves both cumulative data throughput and transponder operational stability relative to reactive baselines.

\subsection{Operational Impact and the Canberra Ground Station Blackout Case Study}

The simulated Day 5 Canberra blackout highlights the operational gap between reactive and proactive scheduling. Under reactive paradigms, an unexpected ground station failure results in immediate signal loss. The spacecraft must then negotiate a new frequency with Madrid or Goldstone, a coordination loop that incurs a 4 to 8 minute telemetry outage~\cite{dsn810005}. Conversely, under the proactive R-SCG model ($\tilde{H}_{\text{int}}$), Madrid operates on a preallocated frequency (color 2) that is strictly nonconflicting with the spacecraft's dedicated channel (color 4). Upon Canberra's failure, the link shifts to Madrid without requiring transponder reconfiguration, preserving continuous data down link.

Similarly, the cumulative throughput curves in Fig. \ref{fig:capacity_comparison} show that eliminating transponder lock drop penalties directly increases throughput. By avoiding the 6 minute cool down of reactive switching and the 12 minute synchronization delay of static re-computation~\cite{dsn810005}, the proactive zero returning scheme recovers $\simThroughputGain$~Gb of telemetry data (a $\simThroughputGainPct$\% increase over the reactive baseline).

\subsection{Comparison and Bench-marking Against Literature}

To validate our findings, we compare the OETGC framework against previous satellite spectrum and scheduling models, specifically the DSN ground network architecture and scheduling of Cheung et al.\ \cite{cheung2015} and the satellite channel assignment of Del Re et al.\ \cite{delre1997}.

\begin{table}[htbp]
\centering
\caption{Methodological Comparison with Existing Literature.}
\label{tab:lit_comparison}
\tiny
\begin{tabular}{@{}llll@{}}
\toprule
Feature / Metric & Prior DSN scheduling \cite{cheung2015} & Satellite reuse \cite{delre1997} & Proposed OETGC model \\
\midrule
Conflict Model & Heuristic resource sharing & Static geographic footprints & Perfect interval or chordal graphs \\
Complexity & NP hard heuristics & NP complete coloring & Polynomial time exact ($O(H \log H)$) \\
Transponder Stability & Not modeled & Not modeled & Zero retuning guaranteed ($k \ge \chi + 1$) \\
Outage Resilience & Reactive replanning & Recomputation needed & Proactive R-SCG recovery \\
Multi-spacecraft & Bounded heuristics & Static planar reuse & Scalable chordal bounds ($3+c$) \\
\bottomrule
\end{tabular}
\end{table}

As summarized in Table~\ref{tab:lit_comparison}, prior DSN contact optimization models \cite{cheung2015} treat scheduling as an NP hard allocation problem, relying on greedy heuristics that do not model frequency stability or transponder returning costs. Del Re's satellite channel reuse models \cite{delre1997} apply static, geographic graph coloring to LEO and GEO grids. While modern approaches increasingly employ machine learning for dynamic routing in satellite constellations, they generally lack the strict, deterministic mathematical guarantees required for zero returning deep space handovers. Collectively, these models struggle to capture the time varying physical constraints of cislunar trajectories, forcing full recalculations during unexpected outages.

Our model solves this by mapping dynamic link conflicts to perfect interval and chordal graphs, producing exact, polynomial time solutions. Unlike prior methods that suffer carrier lock drops during handovers or failures, our approach guarantees zero returning recovery without modifying spacecraft hardware.

Under nominal conditions, cislunar orbits are highly deterministic and straightforward to model using standard SPICE kernels. However, this geometric predictability breaks down during unexpected flight events, such as emergency orbital corrections or autonomous safe mode transitions. When these anomalies alter the spacecraft's trajectory, any precomputed ground schedules immediately become invalid. We integrated the Random Forest classifier specifically to provide onboard autonomy during these critical gaps. Rather than waiting for ground control to calculate and upload new orbital data, the classifier uses real time local sensor data to predict impending handovers instantly. This ensures the spacecraft can maintain its communication links independently until contact with mission control is fully restored.
\subsection{Model Assumptions and Physical Limitations}
Implementing our scheduling framework involves a few physical assumptions:
\begin{enumerate}
    \item \textbf{Stochastic Rain Fades.} We model weather drops as independent Bernoulli trials. In reality, atmospheric attenuation (especially at Ka band) is time correlated, exhibiting clustering behavior (long periods of heavy rain followed by clear sky). Future models should utilize time dependent Markov chain models to capture rain fade duration.
    \item \textbf{Doppler Shift and Guard Bands.} Our model treats the frequency spectrum as discrete mathematical integers. In physical space transceivers, high relative spacecraft velocity introduces significant Doppler shifts, which dynamically alter the occupied bandwidth. To handle these rapid frequency offsets without breaking the precomputed interval boundaries, real world receivers typically rely on hardware level precompensation or blind estimation tracking loops~\cite{ali1998doppler}.
    \item \textbf{Idealized Lunar Geometry.} The orbital simulation assumes a simplified circular orbit with a constant sidereal Earth rotation and $0^\circ$ lunar declination. While real world cislunar trajectories fluctuate by $\pm 28^\circ$ declination (altering the exact start and end times of visibility windows), the underlying $K_3$ and $K_3 + I_m$ topologies remain structurally invariant.
\end{enumerate}

\section{Conclusion}
\label{sec:conclusion}

We presented an analytical graph theoretic framework for resilient spectrum scheduling in cislunar space networks. By mapping temporal tracking sequences to perfect interval graphs and chordal topologies, we demonstrated that a single reserve channel ($k=4$) guarantees zero transponder retuning across nominal handovers and single station outages. For multispacecraft clusters, we established that joint conflict graphs remain chordal, requiring at most $3+c$ channels for $c$ concurrent assets. Orbital simulations verify that eliminating carrier acquisition delays yields a 1.3\% increase in cumulative telemetry throughput. Future research will focus on extending the spectral bounds to time correlated Markovian rain fade models and highly elliptical cislunar orbits.

\section*{Declarations}
\noindent{\textbf{Funding:} The author M Kamrujjaman research was partially supported by the Bose Centre of Advanced Study and Research in Natural Sciences, University of Dhaka,  Bangladesh. 
	
	\medskip
\noindent\textbf{Data Statement:}	No new data were generated in this study. 
	

	\medskip
	\noindent\textbf{Authors' contributions (CRediT):}\\
	\noindent\textbf{Most Esrat Jahan:} Conceptualization, Literature review, Formal analysis, Data curation,  Software, Methodology, Validation,  Writing--original draft.\\
	\textbf{Md. Kamrujjaman:} Conceptualization,  Methodology, Software, Resources,  Investigation, Supervision,  Writing--review \& editing. All authors have read and agreed to the published version of the manuscript.
	
		\medskip
	\noindent\textbf{Competing interests:} The authors declare no competing interests.

	\medskip
	\noindent\textbf{Consent for publication:} No consent is  required to publish this article. 
	
		\medskip
	\noindent\textbf{Generative AI Statement:} The author(s) acknowledge the use of generative AI and text-editing tools,  in the preparation and linguistic refinement of this paper.  After using these tools, the authors 
	reviewed and edited all content as needed and take full responsibility for the content of the 
	published article. 

\bibliographystyle{vancouver}
\bibliography{references}

\end{document}